\documentclass{amsart}
\usepackage{amsmath, amscd, amssymb, amsthm}
\usepackage{bbm}
\usepackage{latexsym}
\usepackage{amsfonts}
\usepackage{graphicx}
\usepackage[all,cmtip]{xy}
\usepackage[colorlinks,linkcolor=blue,breaklinks=true,urlcolor=blue,citecolor=blue,anchorcolor=blue,pagebackref]{hyperref}%
\usepackage{geometry}
\hypersetup{
pdftitle={The Streets--Tian conjecture on compact homogeneous quotients}
}
\allowdisplaybreaks[2]
\numberwithin{equation}{section}

\newtheorem{theorem}{Theorem}
\newtheorem{lemma}{Lemma}
\newtheorem{corollary}[theorem]{Corollary}
\newtheorem{remark}{Remark}
\newtheorem{proposition}{Proposition}

\newtheorem{example}{Example}

\newcommand{\R}{{\mathbb R}}

\newcommand{\Z}{{\mathbb Z}}
\newcommand{\C}{{\mathbb C}}

\renewcommand*\backref[1]{}
\renewcommand*\backrefalt[4]{ \ifcase #1 \or (cited on page #2) \else (cited on pages #2) \fi}

\newcommand{\be}{\begin{equation}}
\newcommand{\ee}{\end{equation}}
\newcommand{\bea}{\begin{eqnarray}}
\newcommand{\eea}{\end{eqnarray}}

\DeclareMathOperator{\tr}{tr}

\def\XXint#1#2#3{{\setbox0=\hbox{$#1{#2#3}{\int}$ }
\vcenter{\hbox{$#2#3$ }}\kern-.6\wd0}}

\begin{document}

\title[Streets--Tian conjecture on homogeneous quotients]
{The Streets--Tian conjecture on compact homogeneous quotients}

\author{Kexiang Cao}
\address{Kexiang Cao. School of Mathematics and Statistics, Yangtze Normal University, Chongqing 408100, China}
\email{{caokx1214@qq.com}}

\author{Shuwen Chen}\thanks{Chen is the corresponding author. All authors contributed equally on this paper.}
\address{Shuwen Chen. School of Mathematical Sciences, Chongqing Normal University, Chongqing 401331, China}
\email{3153017458@qq.com}

\thanks{}

\subjclass[2020]{53C55 (primary)}
\renewcommand{\subjclassname}{2020 Mathematics Subject Classification}
\keywords{Streets--Tian conjecture, Hermitian-symplectic metric, homogeneous space, unimodular Lie group, Aeppli cohomology}

\begin{abstract}
A Hermitian-symplectic metric is a Hermitian metric whose K\"ahler form is given by the $(1,1)$-part of a closed $2$-form. Streets-Tian Conjecture states that a compact complex manifold admitting a Hermitian-symplectic metric must be K\"ahlerian (i.e., admitting a K\"ahler metric). We study Hermitian-symplectic metrics on homogeneous complex manifolds with compact isotropy, and on their compact discrete quotients. If the transitive group is connected and unimodular, we show that an invariant Hermitian-symplectic form gives an invariant K\"ahler metric, with no lattice assumption. By averaging, the same conclusion holds for compact quotients, even if the initial metric has no invariant lift. The proof uses the top exterior power of a Hermitian-symplectic form to bound the volume functional on an invariant Aeppli slice. Applying this bound to midpoint metrics gives a uniform trace estimate. From this, volume maximization produces a balanced metric in the slice, which is also pluriclosed and hence K\"ahler. We also describe the resulting K\"ahler representative and give examples showing the role of the closed lift.
\end{abstract}

\maketitle

\tableofcontents

\markleft{Kexiang Cao and Shuwen Chen}
\markright{Streets--Tian conjecture on homogeneous quotients}

\section{Introduction}\label{sec:intro}

Let $(M^n,g)$ be a Hermitian manifold with complex structure $J$ and fundamental form $\omega$.
According to Streets and Tian \cite{ST}, a real 2-form $\Omega$ on $M$ is called a \emph{Hermitian-symplectic form} if $d\Omega=0$ and $\Omega^{1,1}>0$. Equivalently, $\Omega$ is a symplectic form such that $\Omega(v,Jv)>0$ for every nonzero real tangent vector $v$; a form of this kind is also said to \emph{tame} $J$. Write $\omega(x, y) = g(Jx, y)$ for the fundamental form of a Hermitian metric $g$. Then $g$ is \emph{Hermitian-symplectic} if $\omega$ is the $(1,1)$-part of such a form. In this case $\Omega=\omega+\alpha+\overline\alpha$, where $\alpha$ is a smooth $(2,0)$-form and $\omega$ need not be closed. The question raised in \cite{ST}, commonly known as the Streets-Tian conjecture, asks whether a compact complex manifold  carrying a Hermitian-symplectic metric must admit a K\"ahler metric.

A Hermitian metric is called \emph{pluriclosed} if $\partial\overline\partial\omega=0$, and \emph{balanced} if $d\omega^{n-1}=0$. From the decomposition of $d\Omega=0$, we see that every Hermitian-symplectic metric is pluriclosed. If a metric is both balanced and pluriclosed, then it is K\"ahler; see \cite{AI}. It is precisely when the balanced metric and the pluriclosed metric are not the same that the two existence problems cannot be treated as equivalent.

Since it was first proposed, the Streets-Tian conjecture has drawn a great deal of interest from geometers. Several special cases have now been settled, along with stronger non-existence results.
In complex dimension $1$, every Hermitian fundamental form is automatically closed, so the conclusion is immediate.
In complex dimension $2$, the conjecture holds: by combining the analysis of surfaces of Streets-Tian with the cone comparison results of Li-Zhang, one shows that a compact complex surface admitting a Hermitian-symplectic structure must be K\"ahler\cite{LZ,ST}.
For the Fujiki $\mathcal{C}$ class, Chiose showed that a compact complex manifold in this class which admits a pluriclosed metric is necessarily K\"ahler; consequently, no non-K\"ahler manifold in this class can even carry a Hermitian-symplectic metric \cite{CH}.
For twistor spaces, Verbitsky proved that a compact non-K\"ahler twistor space cannot carry any pluriclosed metric, so the Streets-Tian conjecture holds for any twistor space\cite{VER}.
For the non-K\"ahler Calabi-Yau threefolds constructed by Fu-Li-Yau from K\"ahler Calabi-Yau threefolds via conifold transitions, no pluriclosed metric exists\cite{FLY}.
For non-balanced Bismut torsion-parallel manifolds, Guo-Zheng proved that if a compact Hermitian manifold admits a non-balanced Bismut-torsion-parallel (BTP) metric, then it cannot admit any Hermitian-symplectic metric; this class includes Vaisman manifolds and the corresponding cases involving Bismut K\"ahler-like metrics\cite{GZ}.
For Chern K\"ahler-like manifolds, if a compact manifold admits a non-K\"ahler Chern K\"ahler-like metric, then it has neither a Hermitian-symplectic metric nor even an astheno-K\"ahler metric, which extends the exclusion result in the Chern-flat case\cite{GZ,DLV}.
Finally, for Vaisman manifolds, Angella-Otiman proved that a non-K\"ahler Vaisman manifold cannot admit a Hermitian-symplectic metric\cite{AO}.

A compact quotient $G/\Gamma$, equipped with a left-invariant complex structure on the Lie group $G$, is usually called a {\em Lie-complex manifold}. The known results include the following.
For complex nilmanifolds, Enrietti-Fino-Vezzoni showed that if a compact complex nilmanifold carries a Hermitian-symplectic metric, then its Lie group must be abelian, and hence it is a complex torus. Consequently, no non-toral complex nilmanifold admits a Hermitian-symplectic metric\cite{EFV}.
For completely solvable solvmanifolds, Fino-Kasuya proved that, under the assumption of a left-invariant complex structure, a Hermitian-symplectic metric exists if and only if the solvmanifold is a complex torus\cite{FK}.
For several subclasses of solvmanifolds, Fino-Kasuya-Vezzoni established the Streets-Tian conjecture in the presence of a $J$-invariant nilpotent complement, an abelian complex structure, or certain almost abelian structures; this also covers Oeljeklaus-Toma manifolds. Fino-Paradiso subsequently removed the extra restrictions in the almost abelian case, thereby extending the result to general almost abelian solvmanifolds\cite{FKV,FP}.
For splitting-type solvmanifolds, Angella-Otal-Ugarte-Villacampa proved the Streets-Tian conjecture for compact complex solvmanifolds with a left-invariant complex structure of splitting type on $G=\mathbb C\ltimes\mathbb C^{n-1}$. This is a result for a specific class of complex structures, not for all solvmanifolds\cite{AOUV}.
For $2$-step solvmanifolds, the main theorem of Chen--Zheng applies. It states that if the Lie group is $2$-step solvable, and the manifold carries a Hermitian-symplectic metric, then it admits a left-invariant K\"ahler metric. This covers many previously studied Lie-group quotient cases, but not all $3$-step solvable groups\cite{ChZ}. Here "$2$-step solvable" means that the derived algebra $[\mathfrak g,\mathfrak g]$ is abelian.

For abelian ideals of codimension $2$, if a solvable Lie algebra contains an abelian ideal $\mathfrak a$ of codimension $2$, then the Streets--Tian conjecture holds for every compatible left-invariant complex structure. Guo-Zheng first proved the case in which the ideal is $J$-invariant; Cao-Zheng treated the complementary case $J\mathfrak a\ne\mathfrak a$. Such Lie algebras are at most $3$-step solvable and need not be $2$-step solvable or completely solvable\cite{GZ,CaZ}. The result for codimension-two abelian ideals also covers some $3$-step solvable examples outside the $2$-step and completely solvable settings.

Several results for certain special cases have also been obtained concerning this conjecture. In the case of degenerate twistor deformations, Soldatenkov-Verbitsky showed that a compact holomorphically symplectic K\"ahler manifold remains K\"ahler under such a deformation. Their proof proceeds by first constructing a Hermitian-symplectic structure, then applying a bimeromorphicity argument in Teichm\"uller space. This establishes the Streets-Tian conjecture for these particular deformation families, but it does not settle the conjecture for all Hermitian-symplectic manifolds\cite{SV}.

For a variational result in an Aeppli class, Dinew-Popovici introduced a functional on the Aeppli cohomology class of a given Hermitian-symplectic metric on a $3$-dimensional Hermitian-symplectic manifold. They proved that a critical point of this functional, whenever it exists, has to be K\"ahler, and they derived a necessary cohomological obstruction to the existence of a K\"ahler metric in the same class. The key limitation is the condition ``if a critical point exists'': since the paper does not establish that such a point always exists, it does not settle the general $3$-dimensional Streets-Tian conjecture \cite{DP}.

For pluriclosed flow, Ye established that the Hermitian-symplectic property is preserved. In the Hermitian-symplectic setting, Ye also proved that any Hermitian-symplectic soliton must be a K\"ahler-Ricci soliton. These findings place restrictions on the search for canonical metrics, but preservation under the flow by itself does not guarantee that every initial Hermitian-symplectic manifold is K\"ahler\cite{YE,YED}.

The setting here also permits a nontrivial compact isotropy group. Let $G$ be a connected real Lie group, $K\subset G$ a compact subgroup, and suppose that $X=G/K$ is equipped with a $G$-invariant integrable complex structure $J$. Recall that a connected Lie group is \emph{unimodular} when its left Haar measure is also right invariant. In terms of the Lie algebra $\mathfrak g$, this amounts to $\tr(\operatorname{ad}_x)=0$ for every $x\in\mathfrak g$, where $\operatorname{ad}_x(y)=[x,y]$. Our main result is the following invariant statement.

\begin{theorem}\label{thm:invariant}
Let $G$ be a connected unimodular real Lie group and $K\subset G$ a compact subgroup. Suppose that $X=G/K$ has a $G$-invariant integrable complex structure $J$ of complex dimension $n\geq2$. If $X$ admits a $G$-invariant Hermitian-symplectic form $\Omega$, then it admits a $G$-invariant K\"ahler metric. More precisely, put $\omega=\Omega^{1,1}$. The set
$$
\mathcal S(\omega)=\{\omega+(d\xi)^{1,1}>0:\ \xi\in\Omega^1_{\R}(X)^G\}
$$
contains exactly one balanced metric, and that metric is K\"ahler. Here $\Omega^1_{\R}(X)^G$ denotes the space of smooth $G$-invariant real one-forms on $X$. Neither compactness of $X$ nor the existence of a lattice in $G$ is assumed.
\end{theorem}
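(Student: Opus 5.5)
The plan is to reduce everything to finite-dimensional linear algebra on $\mathfrak m=T_{o}X$, with $o=eK$, where $G$-invariant forms correspond to $K$-invariant forms on $\mathfrak m$ (equivalently, $\mathfrak k$-basic forms on $\mathfrak g$). In this picture $\mathcal S(\omega)$ is an open convex subset of the finite-dimensional affine space $\omega+\{(d\xi)^{1,1}:\xi\in\Omega^1_\R(X)^G\}$. Since $K$ is compact, we can fix an invariant volume form $\nu$ on $X$ and define
$$
V(\eta)=\frac{\eta^n}{n!\,\nu},\qquad \eta\in\mathcal S(\omega).
$$
This is a constant function on $X$, because everything is invariant. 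The map $\eta\mapsto V(\eta)^{1/n}$ is concave on positive $(1,1)$-forms, and strictly concave modulo scaling; this is the concavity of $\det^{1/n}$. Hence $V$ has at most one critical point on the convex set $\mathcal S(\omega)$, and any critical point is the unique maximum.

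\textbf{Step 1 (critical points are balanced).} Varying along $\xi$ gives $\frac{d}{dt}V(\omega+t(d\xi)^{1,1})\,\nu=\eta^{n-1}\wedge d\xi/(n-1)!$. Unimodularity of $G$, together with compactness of $K$, is what makes invariant exact top forms vanish. Any invariant $(2n-1)$-form $\beta$ has $d\beta=c\,\nu$ with $c$ given by a trace of $\operatorname{ad}$ on $\mathfrak g/\mathfrak k$, and this trace vanishes when $G$ is unimodular and $K$ compact. Integrating by parts at the level of invariant forms, we get $\eta^{n-1}\wedge d\xi=\pm d\eta^{n-1}\wedge\xi$. So $\eta$ is critical iff $d\eta^{n-1}$ pairs to zero with every invariant $1$-form, i.e. iff the invariant $(2n-1)$-form $d\eta^{n-1}$ vanishes; here we use Hodge-star duality between invariant $1$-forms and invariant $(2n-1)$-forms. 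Thus critical points are exactly the balanced metrics in $\mathcal S(\omega)$. Running the same computation along the segment between two balanced metrics, and using strict concavity, shows they coincide up to scale. Scaling is excluded because $\eta$ and $\lambda\eta$ both lie in the slice only if $(\lambda-1)\eta$ is $d$-exact in the $(1,1)$ sense. But then $\eta^n$ would be an exact top form, so $\int$-type invariant pairing would give $V(\eta)=0$. This yields uniqueness.

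\textbf{Step 2 (each element is pluriclosed, so a balanced one is K\"ahler).} For $\eta=\omega+(d\xi)^{1,1}$, the form $\Omega+d\xi$ is closed with $(1,1)$-part $\eta$. Hence $\eta$ is Hermitian-symplectic and so pluriclosed, and a balanced pluriclosed metric is K\"ahler by \cite{AI}.

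\textbf{Step 3 (existence, the main obstacle).} We must show that $V$ attains its maximum on $\mathcal S(\omega)$. Two things are needed: $V$ stays bounded, and maximizing sequences neither escape to infinity nor degenerate to the boundary. Degeneration is harmless, since $V\to0$ on the boundary of the positive cone while $V>0$ inside. The real issue is an upper bound together with properness. Here we use $\Omega$. For $\eta\in\mathcal S(\omega)$, write $\tilde\Omega=\Omega+d\xi=\eta+\beta+\bar\beta$ with $\beta$ of type $(2,0)$. Then $\tilde\Omega^n$ is an invariant exact top form, so it is zero by unimodularity. Expanding, $\tilde\Omega^n$ is $\eta^n$ plus terms of the form $c_k\,\eta^{n-2k}\wedge(\beta\wedge\bar\beta)^k$ with signs making them nonnegative. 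This follows from the standard positivity of $i^{k^2}\beta^k\wedge\bar\beta^k$-type terms, after tracking the sign conventions. The identity gives a bound of $V(\eta)$ in terms of the corresponding expression for $\Omega$ itself, so $V$ is bounded on the slice. To get properness, apply the same identity to the midpoint $\frac12(\eta+\omega)$, whose closed lift is $\frac12(\tilde\Omega+\Omega)$. Comparing with the concavity inequality $V(\tfrac12(\eta+\omega))^{1/n}\ge\frac12(V(\eta)^{1/n}+V(\omega)^{1/n})$ and the arithmetic–geometric mean bound $V(\frac12(\eta+\omega))\gtrsim \tr_\omega\eta\cdot V(\omega)^{\dots}$, should give a uniform bound $\tr_\omega\eta\le C$ on the superlevel set $\{V\ge V(\omega)\}$. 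That makes the superlevel set compact in the finite-dimensional slice, so a maximizer exists. It is interior, hence critical, hence balanced by Step 1 and K\"ahler by Step 2. The delicate point I expect is making the midpoint trace estimate honest, i.e. getting the sign and mixed-term control in the expansion of $\tilde\Omega^n$ so that $V$ genuinely controls $\tr_\omega\eta$.
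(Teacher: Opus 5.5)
Your outline follows the paper's proof: formal Stokes from unimodularity, invariance of the top-degree coefficient of $\Omega^n$ under $\Omega\mapsto\Omega+d\xi$, the positivity expansion of Lemma~\ref{lem:volume}, a midpoint trace bound, volume maximization, critical $\Leftrightarrow$ balanced, and Alexandrov--Ivanov. One claim in Step~3 is false as written, however. $\tilde\Omega=\Omega+d\xi$ is closed but not exact, so $\tilde\Omega^n$ is not an exact top form. If it were, formal Stokes would give $0<\eta^n\le\tilde\Omega^n=0$, and no invariant Hermitian-symplectic form could exist at all. What is exact is the difference,
\[
\tilde\Omega^n-\Omega^n=d\Bigl(\xi\wedge\sum_{j=0}^{n-1}\tilde\Omega^{\,n-1-j}\wedge\Omega^j\Bigr).
\]
Hence $\tilde\Omega^n=\Omega^n$ and $V(\eta)\le C_0:=\Omega^n/(n!\,\nu)$ on all of $\mathcal S(\omega)$. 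This is presumably what your next sentence intends.

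Once that is fixed, the step you flag as delicate is a one-liner, and AM--GM is the wrong tool, since it bounds $\det$ from above by $\tr$. The mixed terms of the lift are already handled by the positivity expansion. Apply $V\le C_0$ to the midpoint, whose lift $\Omega+\tfrac12 d\xi$ you identified, and expand the product over the eigenvalues $\lambda_j>0$ of $\eta$ relative to $\omega$:
\[
C_0\ge V\bigl(\tfrac12(\omega+\eta)\bigr)=2^{-n}V(\omega)\prod_{j=1}^n(1+\lambda_j)\ge 2^{-n}V(\omega)\bigl(1+\tr_\omega\eta\bigr).
\]
This bounds $\tr_\omega\eta$ on the whole slice, with no superlevel set or concavity comparison needed. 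So the slice is bounded and the maximum is interior.

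Finally, your scaling exclusion in Step~1 is not justified as stated: $(d\xi)^{1,1}$ is not closed, so its $n$-th power need not be exact. The exclusion is also unnecessary. Two critical points of the concave function $V^{1/n}$ on the convex slice are both global maxima, so they have equal value, while $V(\lambda\eta)=\lambda^nV(\eta)$ forces $\lambda=1$. Alternatively, use the strict concavity of $\log\det$, as the paper does.
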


We now turn to compact quotients. Let $\Gamma\subset G$ be a discrete subgroup acting freely, properly discontinuously and cocompactly on $X$. We write $M=X/\Gamma$ for the orbit space of the left action $\gamma\cdot(gK)=(\gamma g)K$; in double-coset notation, $M=\Gamma\backslash G/K$. When $K=\{e\}$, we retain the notation $M=G/\Gamma$ from \cite{CaZ,GZ,ChZ}, with the same orbit-space convention. A $G$-invariant form on $X$ descends to $M$, although the $G$-action on $X$ need not descend to $M$. By Lemma \ref{lem:average}, compactness of the quotient forces $G$ to be unimodular, and any Hermitian-symplectic form on $M$ may be replaced by invariant data on $X$. Combining this with Theorem \ref{thm:invariant}, we obtain the following corollary.

\begin{corollary}\label{cor:quotient}
Let $G$ be a connected real Lie group, $K\subset G$ a compact subgroup, and let $X=G/K$ be a complex manifold of complex dimension $n\geq2$ endowed with a $G$-invariant integrable complex structure $J$. Let $\Gamma\subset G$ be a discrete subgroup whose action on $X$ by left translations is free, properly discontinuous and cocompact. If $M=X/\Gamma$ admits a Hermitian-symplectic metric, then it admits a K\"ahler metric whose lift to $X$ is $G$-invariant.
\end{corollary}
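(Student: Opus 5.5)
The plan is to reduce Corollary \ref{cor:quotient} to Theorem \ref{thm:invariant} through the averaging procedure of Lemma \ref{lem:average}.

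\textbf{Step 1: unimodularity and an invariant measure.} Since $\Gamma$ acts cocompactly on $X=G/K$ and $K$ is compact, $\Gamma$ is a lattice in $G$; that is, $\Gamma\backslash G$ is compact. A connected Lie group containing a lattice is unimodular, and this is part of Lemma \ref{lem:average}. It follows that $\Gamma\backslash G$ carries a finite right $G$-invariant probability measure $\mu$.

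\textbf{Step 2: averaging.} Let $\tilde\Omega$ be a Hermitian-symplectic form on $M$, and let $\pi^*\tilde\Omega$ be its lift, which is a $\Gamma$-invariant closed $2$-form on $X$. We define
$$
\Omega_0=\int_{\Gamma\backslash G} L_g^*(\pi^*\tilde\Omega)\,d\mu(\Gamma g).
$$
The integrand depends only on the coset $\Gamma g$, because $\pi^*\tilde\Omega$ is $\Gamma$-invariant. We then check three properties.
\begin{itemize}
\item[(i)] $\Omega_0$ is $G$-invariant, by right invariance of $\mu$.
\item[(ii)] $\Omega_0$ is closed, because $d$ commutes with pullback and with integration over a compact parameter space.
\item[(iii)] $\Omega_0^{1,1}>0$. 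Since $J$ is $G$-invariant, each $L_g$ is holomorphic, so taking the $(1,1)$-part commutes with $L_g^*$. Positivity is preserved under averaging because the positive $(1,1)$-forms form a convex cone. Moreover, $\Omega_0^{1,1}$ is pointwise bounded below: by compactness of $M$, the lifted form is uniformly positive with respect to any fixed $G$-invariant Hermitian metric, which exists because $K$ is compact.
\end{itemize}
Thus $\Omega_0$ is a $G$-invariant Hermitian-symplectic form on $X$. This is exactly the content of Lemma \ref{lem:average}.

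\textbf{Step 3: applying the theorem and descending.} Theorem \ref{thm:invariant} applies to $G$, $K$, $J$ and $\Omega_0$, using $n\ge2$ and the unimodularity from Step 1. It yields a $G$-invariant K\"ahler form $\omega_K=\Omega_0^{1,1}+(d\xi)^{1,1}$ on $X$. Because $\Gamma\subset G$, the form $\omega_K$ is $\Gamma$-invariant, so it descends to a closed positive $(1,1)$-form on $M$, that is, a K\"ahler metric on $M$. By construction its lift to $X$ is $G$-invariant.

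\textbf{Main obstacle.} I expect the delicate points to lie in Step 2. The first is that $\mu$ must be right-invariant, which is why unimodularity is needed. The second is the well-definedness of the integrand on $\Gamma\backslash G$. We have $L_{\gamma g}^*=L_g^*L_\gamma^*$, and the lifted form is $\Gamma$-invariant under left translation, so $L_{\gamma g}^*(\pi^*\tilde\Omega)=L_g^*(\pi^*\tilde\Omega)$ and the integrand depends only on the coset $\Gamma g$. Once these are in place, the rest is direct.
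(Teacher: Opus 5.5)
Your proposal is correct and follows the paper's proof: average with the operator $\mathsf S$ of Lemma \ref{lem:average} (you verify its properties directly, where the paper cites Kwong), apply Theorem \ref{thm:invariant} to the resulting $G$-invariant Hermitian-symplectic form, and descend the invariant K\"ahler form through the holomorphic covering $X\to M$. The uniform lower bound in (iii) is harmless but not needed, since averaging a strictly positive integrand against a probability measure already gives a strictly positive $(1,1)$-part.
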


Theorem \ref{thm:invariant} with $K=\{e\}$ also yields the following Lie algebra consequence, without requiring a compact quotient.

\begin{corollary}\label{cor:algebra}
Let $\mathfrak g$ be a real unimodular Lie algebra of dimension $2n$ with $n\geq2$, and let $J$ be an integrable complex structure on $\mathfrak g$. Suppose that $(\mathfrak g,J)$ carries a Hermitian-symplectic form $\Omega\in\Lambda^2\mathfrak g^*$, that is $d_{\mathfrak g}\Omega=0$ and $\Omega(v,Jv)>0$ for all nonzero $v\in\mathfrak g$, where $d_{\mathfrak g}$ denotes the Chevalley--Eilenberg differential. Then there exists a positive real $(1,1)$-form $\widetilde\omega$ satisfying $d_{\mathfrak g}\widetilde\omega=0$ on $(\mathfrak g,J)$. In particular, $(\mathfrak g,J)$ admits a K\"ahler inner product. No assumptions of solvability, nilpotency, special ideals, or lattices are required.
\end{corollary}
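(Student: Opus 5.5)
The plan is to deduce Corollary \ref{cor:algebra} directly from Theorem \ref{thm:invariant} with $K=\{e\}$, by passing from $\mathfrak g$ to the simply connected Lie group it integrates to.

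\textbf{Step 1: realize the algebraic data on a group.} Let $G$ be the connected, simply connected Lie group with Lie algebra $\mathfrak g$. Unimodularity of $\mathfrak g$, i.e.\ $\tr(\operatorname{ad}_x)=0$ for all $x$, is equivalent to unimodularity of the connected group $G$. This is the characterization recalled before Theorem \ref{thm:invariant}: the modular function is $\det\operatorname{Ad}_g^{-1}$, and on a connected group it is trivial exactly when $\tr\operatorname{ad}=0$.

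\textbf{Step 2: transport $J$ and $\Omega$.} Extend $J$ to a left-invariant almost complex structure on $X=G=G/\{e\}$. Integrability of $J$ on $\mathfrak g$, meaning vanishing of the Nijenhuis tensor on $\mathfrak g$, is the same as vanishing of the Nijenhuis tensor on left-invariant fields. By tensoriality this gives integrability on all of $G$, and Newlander--Nirenberg makes $X$ a complex manifold of dimension $n\ge2$.

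Extend $\Omega$ to a left-invariant $2$-form. On left-invariant forms the de Rham differential coincides with the Chevalley--Eilenberg differential $d_{\mathfrak g}$, so $d\Omega=0$. The taming condition $\Omega(v,Jv)>0$ at $e$ propagates by invariance to every point. Hence $\Omega$ is a $G$-invariant Hermitian-symplectic form.

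\textbf{Step 3: apply the theorem.} Theorem \ref{thm:invariant} gives a $G$-invariant K\"ahler metric on $X$, namely the unique balanced element $\widetilde\omega=\omega+(d\xi)^{1,1}$ of $\mathcal S(\omega)$, with $\xi$ left-invariant. Restricting $\widetilde\omega$ to $T_eG=\mathfrak g$ gives a positive real $(1,1)$-form. It satisfies $d_{\mathfrak g}\widetilde\omega=0$, because $d\widetilde\omega$ is left-invariant and its value at $e$ is $d_{\mathfrak g}\widetilde\omega$. Then $g(x,y)=\widetilde\omega(x,Jy)$ is a $J$-Hermitian inner product with closed fundamental form, that is, a K\"ahler inner product.

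\textbf{Remaining checks.} No step invokes a lattice, solvability or nilpotency, since Theorem \ref{thm:invariant} assumes none of these. The only mildly delicate point is the dictionary between invariant forms and $\Lambda^\bullet\mathfrak g^*$: the identification of $d$ with $d_{\mathfrak g}$ and of $J$-types at $e$. This dictionary is standard, so I expect no real obstacle; all the analytic difficulty is contained in Theorem \ref{thm:invariant}.
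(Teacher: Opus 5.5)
Your proposal is correct and matches the paper's proof: pass to the connected simply connected group $G$, extend $J$ and $\Omega$ by left translation, note that $G$ is unimodular, apply Theorem \ref{thm:invariant} with $K=\{e\}$, and evaluate the resulting left-invariant K\"ahler form at $e$. Your extra checks (integrability via the Nijenhuis tensor, and the identification of $d$ on left-invariant forms with $d_{\mathfrak g}$) are details the paper leaves implicit.
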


Our use of an Aeppli slice follows Kwong \cite{Kwong}, who proved that on a compact homogeneous quotient with compact isotropy, the critical points of the Hermitian volume on an invariant Aeppli slice are exactly the balanced metrics \cite{Kwong}. His Theorem F obtains a unique such point under the assumption that a balanced metric already exists. We use the same critical-point mechanism, but derive boundedness from the Hermitian-symplectic form instead. The required positivity of powers of a closed lift is already used in \cite{CaZ} and \cite{GZ}. Our additional observation is that a determinant bound on an affine positive slice, when applied at its midpoint with a fixed positive element, yields a bound for the trace, not just for the determinant. Unimodularity provides the same integration-by-parts identity in the invariant complex, even when no compact quotient exists.

The generalized volume introduced by Dinew--Popovici \cite{DP} gives another variational approach to Hermitian-symplectic metrics in an Aeppli class. When the quotient is compact, our functional coincides with the Hermitian volume on a finite-dimensional space, up to a fixed positive factor; in the noncompact setting, it is a top-degree coefficient. Our argument does not rely on a convergence theorem for a geometric flow, nor does it use the special bracket identities of \cite{CaZ,GZ,ChZ}. Those identities are still useful for explicit classifications, but here our aim is an existence argument.

Section \ref{sec:prelim} fixes the Hermitian notation and recalls two facts used later. In Section \ref{sec:invariant} we discuss invariant forms and averaging. Section \ref{sec:proof} proves the main theorem and its two corollaries. Section \ref{sec:representative} discusses the K\"ahler representative and a deformation within the Aeppli slice containing it. Two examples are given in Section \ref{sec:examples}.

\section{Preliminaries}\label{sec:prelim}

Let $(M^n,g)$ be a Hermitian manifold with complex structure $J$ and fundamental form $\omega$. Extend $g=\langle\,,\,\rangle$ complex bilinearly, and fix a local \emph{unitary frame} $e=\{e_1,\ldots,e_n\}$ of $T^{1,0}M$, with dual coframe $\varphi=\{\varphi_1,\ldots,\varphi_n\}$. Then $\omega=\sqrt{-1}\sum_i\varphi_i\wedge\overline\varphi_i$, and the volume form is $\nu_\omega=\omega^n/n!$. Our convention gives $\omega(v,Jv)=g(v,v)$ for real $v$. Frame indices range from $1$ to $n$ unless otherwise stated.

Denote by $\nabla$ the \emph{Chern connection} and by $T$ its torsion, $T(x,y)=\nabla_x y-\nabla_y x-[x,y]$. Using the notations of \cite{ChZ, GZ}, write $T(e_i,e_k)=\sum_jT^j_{ik}e_j$ and $T(e_i,\overline e_j)=0$. Let $\theta=(\theta_{ij})$ be the connection matrix, defined by $\nabla e_i=\sum_j\theta_{ij}e_j$, and put $\tau_j=\frac12\sum_{i,k}T^j_{ik}\varphi_i\wedge\varphi_k$. Viewing $\varphi$ and $\tau=(\tau_1,\ldots,\tau_n)^t$ as columns, with ${}^t$ denoting transpose, we have
\begin{equation*}
d\varphi=-{}^t\theta\wedge\varphi+\tau,
\qquad
\partial\omega=\sqrt{-1}\,{}^t\tau\wedge\overline\varphi
=\frac{\sqrt{-1}}2\sum_{i,k,j}T^j_{ik}\varphi_i\wedge\varphi_k\wedge\overline\varphi_j.
\end{equation*}
Differentiating $\omega$ gives the second identity, since the connection terms cancel. The components $T^j_{ik}$ are skew-symmetric in $i,k$ and are twice those with the same notation in \cite{SS}. The \emph{Gauduchon torsion 1-form} in \cite{GZ} is $\eta=\sum_i\eta_i\varphi_i$, where $\eta_i=\sum_kT^k_{ki}$. It satisfies
\begin{equation*}
\partial\omega^{n-1}=-\eta\wedge\omega^{n-1}.
\end{equation*}
The metric $g$ is balanced if and only if $\eta=0$, or equivalently, $d \omega^{n-1}=0$.

For a form $\rho$, write $\rho^{p,q}$ for its $(p,q)$-part. If $\rho$ is a real 2-form, then $\rho^{1,1}(x,y)=\frac12\{\rho(x,y)+\rho(Jx,Jy)\}$ and $\rho(v,Jv)=\rho^{1,1}(v,Jv)$. We write $\rho^{1,1}>0$ or $\rho^{1,1}\geq0$ when this value is positive for every nonzero real $v$ or nonnegative for every real $v$, respectively. The latter condition is called \emph{semipositivity}. If $\rho^{1,1}>0$, then $\rho$ is nondegenerate. A \emph{closed real lift} of $\omega$ is a closed real 2-form $\Omega$ with $\Omega^{1,1}=\omega$. Writing $\Omega=\omega+\alpha+\overline\alpha$, where $\alpha=\Omega^{2,0}$, and using $d=\partial+\overline\partial$, we obtain
\begin{equation}\label{eq:lift-system}
\partial\alpha=0,\qquad
\partial\omega+\overline\partial\alpha=0,
\end{equation}
and their conjugates. Applying $\overline\partial$ to the second equation gives $\partial\overline\partial\omega=0$. Thus a Hermitian-symplectic metric is pluriclosed; see \cite{ST,EFV,CaZ,GZ,ChZ}.

We will need the following positivity formula; see also \cite[proof of Proposition 1]{GZ}.

\begin{lemma}[\cite{CaZ}]\label{lem:volume}
Let $V$ be a complex vector space of complex dimension $n$, let $\omega$ be a positive real $(1,1)$-form on $V$, and let $\alpha$ be a $(2,0)$-form. Put $\Omega=\omega+\alpha+\overline\alpha$. With respect to the complex orientation,
\begin{equation}\label{eq:volume-expansion}
\Omega^n=
\sum_{r=0}^{\lfloor n/2\rfloor}
\frac{n!}{(n-2r)!(r!)^2}\,
\omega^{n-2r}\wedge\alpha^r\wedge\overline\alpha^{\,r}
\ \geq\ \omega^n.
\end{equation}
Equality holds if and only if $\alpha=0$.
\end{lemma}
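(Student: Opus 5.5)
We work in a unitary coframe $\varphi_1,\ldots,\varphi_n$ for $\omega$ at the given point, so that $\omega=\sqrt{-1}\sum_i\varphi_i\wedge\overline\varphi_i$. The plan has three parts: expand $\Omega^n$ by a trinomial count, then evaluate the sign of each cross term, and finally settle the equality case.

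\emph{Expansion.} Since $\omega$, $\alpha$ and $\overline\alpha$ are $2$-forms, they commute under the wedge product, so the multinomial expansion applies:
$$
\Omega^n=\sum_{a+b+c=n}\frac{n!}{a!\,b!\,c!}\,\omega^a\wedge\alpha^b\wedge\overline\alpha^{\,c}.
$$
The factor $\omega^a\wedge\alpha^b\wedge\overline\alpha^{\,c}$ has bidegree $(a+2b,\,a+2c)$. On a space of complex dimension $n$, only the bidegree $(n,n)$ survives, which forces $b=c=r$ and $a=n-2r$. This gives the first equality in \eqref{eq:volume-expansion}.

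\emph{Sign of each term.} It suffices to show that $\omega^{n-2r}\wedge\alpha^r\wedge\overline\alpha^{\,r}\ge0$ for every $r\ge1$. The key observation is that $\alpha^r$ is a $(2r,0)$-form $\beta$, and $\overline\alpha^{\,r}=\overline\beta$. Hence the term equals $\omega^{n-2r}\wedge\beta\wedge\overline\beta$ with $\beta$ of pure type $(p,0)$, where $p=2r$. For such forms the standard positivity fact applies: $(\sqrt{-1})^{p^2}\beta\wedge\overline\beta\wedge\omega^{n-p}$ is a nonnegative multiple of the volume form. Concretely, write $\beta=\sum_{|I|=p}\beta_I\varphi_I$. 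Wedging with $\omega^{n-p}$ picks out, for each $I$, the complementary indices, and the result is
$$
c_{n,p}\,(\sqrt{-1})^{-p^2}\Big(\sum_I|\beta_I|^2\Big)\,\omega^n,
$$
where $c_{n,p}>0$.

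For $p=2r$ we have $p^2=4r^2$, so $(\sqrt{-1})^{p^2}=1$ and no sign factor appears. Thus each term with $r\ge1$ is $\ge0$. The $r=0$ term is exactly $\omega^n$, which proves the inequality $\Omega^n\ge\omega^n$.

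\emph{Equality.} Equality forces every term with $r\ge1$ to vanish, and in particular the $r=1$ term. That term is a positive multiple of $|\alpha|^2\omega^n$, since $\omega^{n-2}\wedge\alpha\wedge\overline\alpha=\frac{2}{n(n-1)}\,|\alpha|^2_\omega\,\omega^n$ up to the normalisation of the norm. Hence $\alpha=0$. The converse, that $\alpha=0$ gives equality, is trivial. This step uses $n\ge2$; when $n=1$ we have $\alpha=0$ automatically.

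\emph{Main obstacle.} The delicate point is the sign bookkeeping in the positivity fact. One must check that reordering $\varphi_I\wedge\overline\varphi_I$, together with the complementary $\sqrt{-1}\,\varphi_j\wedge\overline\varphi_j$ factors, into the standard orientation produces exactly the factor $(\sqrt{-1})^{-p^2}$. Reordering $\varphi_{i_1}\cdots\varphi_{i_p}\overline\varphi_{i_1}\cdots\overline\varphi_{i_p}$ into the paired product $\prod_k(\varphi_{i_k}\wedge\overline\varphi_{i_k})$ costs the sign $(-1)^{p(p-1)/2}$. I would verify this count directly and then check that it is consistent with the $p=2$ case, computed by hand for $\alpha=\varphi_1\wedge\varphi_2$ in dimension $2$, where
$$
\alpha\wedge\overline\alpha=\varphi_1\varphi_2\overline\varphi_1\overline\varphi_2=-\varphi_1\overline\varphi_1\varphi_2\overline\varphi_2=\big(\sqrt{-1}\varphi_1\overline\varphi_1\big)\wedge\big(\sqrt{-1}\varphi_2\overline\varphi_2\big)\ge0.
$$
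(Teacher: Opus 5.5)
Your proof is correct and follows the route the paper relies on. The paper gives no separate proof beyond the citation and the remark that the $r=1$ term is strictly positive when $\alpha\neq0$, and your argument supplies exactly that: the multinomial expansion with the bidegree count, termwise nonnegativity of $\omega^{n-2r}\wedge\alpha^r\wedge\overline\alpha^{\,r}$ via the $(2r,0)$-form $\beta=\alpha^r$, and the equality case read off from the $r=1$ term. The sign bookkeeping you flag does close up: $(-1)^{p(p-1)/2}(\sqrt{-1})^{-p}=(\sqrt{-1})^{-p^2}$, which equals $1$ for $p=2r$, and the constant is $c_{n,p}=(n-p)!/n!$ when the sum runs over increasing multi-indices $I$.
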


The $r=1$ term in \eqref{eq:volume-expansion} is strictly
positive when $\alpha\neq0$. If $n=1$, then $\alpha=0$
automatically. We also recall the following result of Alexandrov and Ivanov \cite[Remark 1]{AI}.

\begin{lemma}[\cite{AI}]\label{lem:balanced-skt}
Let $(M^n,g)$ be a Hermitian manifold, not necessarily compact. If $d\omega^{n-1}=0$ and $\partial\overline\partial\omega=0$, then $d\omega=0$.
\end{lemma}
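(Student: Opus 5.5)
The statement is pointwise and purely algebraic once the two differential conditions are rewritten. I would therefore avoid any integration argument, which is unavailable since $M$ need not be compact. The plan is to turn the balanced condition into primitivity of $\partial\omega$, and to use pluriclosedness to show that the Hodge--Riemann pairing of $\partial\omega$ with its conjugate vanishes.

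First, the case $n=2$ is immediate: $\omega^{n-1}=\omega$, so balanced already means $d\omega=0$. Assume $n\geq3$. Since $d\omega^{n-1}=(n-1)\,d\omega\wedge\omega^{n-2}$, the balanced condition gives, by type decomposition,
\begin{equation*}
\partial\omega\wedge\omega^{n-2}=0,\qquad \overline\partial\omega\wedge\omega^{n-2}=0 .
\end{equation*}
By the Lefschetz decomposition for $3$-forms, the first identity says exactly that the $(2,1)$-form $\beta:=\partial\omega$ is primitive with respect to $\omega$, i.e.\ $\Lambda\beta=0$. Indeed, a $3$-form $\beta=\beta_0+\omega\wedge\beta_1$ with $\beta_0$ primitive and $\beta_1$ a $1$-form satisfies $\beta\wedge\omega^{n-2}=\omega^{n-1}\wedge\beta_1$, and $L^{n-1}$ is injective on $1$-forms. (Equivalently, in the frame notation of Section~\ref{sec:prelim}, this is $\eta=0$.)

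Next I use pluriclosedness. Apply $\partial$ to the identity $\overline\partial\omega\wedge\omega^{n-2}=0$:
\begin{equation*}
0=\partial\overline\partial\omega\wedge\omega^{n-2}-(n-2)\,\overline\partial\omega\wedge\partial\omega\wedge\omega^{n-3}.
\end{equation*}
The first term vanishes because $\partial\overline\partial\omega=0$. Hence, pointwise,
\begin{equation*}
\beta\wedge\overline\beta\wedge\omega^{n-3}=0 .
\end{equation*}

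Finally I invoke the Hodge--Riemann bilinear relations at each point. For a primitive form of type $(2,1)$ in complex dimension $n$, the expression $c_n\,\sqrt{-1}\,\beta\wedge\overline\beta\wedge\omega^{n-3}$ equals a nonzero constant multiple of $|\beta|^2\nu_\omega$. Here $c_n$ is a unimodular constant determined by the degree and type, explicitly $\sqrt{-1}^{\,k^2}(-1)^{k(k-1)/2}$-type signs with $k=3$ and a factor of $\sqrt{-1}^{\,p-q}$. Consequently $\beta=0$, that is $\partial\omega=0$, and by reality $\overline\partial\omega=\overline{\partial\omega}=0$, so $d\omega=0$.

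The main care point is the last step: fixing the sign constant correctly is routine and not needed, but one must check that definiteness holds. It does, because primitivity puts $\beta$ in a single irreducible piece on which the pairing is definite. If one prefers a hands-on check, write $\beta=\frac{\sqrt{-1}}2\sum T^j_{ik}\varphi_i\wedge\varphi_k\wedge\overline\varphi_j$ in a unitary frame. Expanding $\beta\wedge\overline\beta\wedge\omega^{n-3}$ then gives a combination $a\sum|T^j_{ik}|^2-b\sum_i|\eta_i|^2$ with $a\neq0$. Since $\eta=0$, this forces $T=0$.
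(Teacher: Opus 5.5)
Your argument is correct, and it takes a different route from the paper. The paper gives no argument of its own: it cites the pointwise identity \cite[formula (2.13)]{AI}, a general identity for Hermitian metrics in terms of torsion and Lee-form quantities, from which the balanced and pluriclosed hypotheses force the torsion to vanish.

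You derive the needed pointwise statement directly. The balanced condition enters twice. First, it gives primitivity of $\partial\omega$ (i.e.\ $\eta=0$). Second, in differentiated form, it gives $\partial\overline\partial\omega^{n-1}=(n-1)\,\partial(\overline\partial\omega\wedge\omega^{n-2})=0$, which together with pluriclosedness yields $\partial\omega\wedge\overline\partial\omega\wedge\omega^{n-3}=0$. The Hodge--Riemann relations for primitive $(2,1)$-forms then take the place of an explicit torsion computation.

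Your route is self-contained and coordinate-free. It is visibly pointwise, which is exactly why compactness can be dropped. It also shows that only the trace condition $\partial\overline\partial\omega\wedge\omega^{n-2}=0$ is needed, not full pluriclosedness. The citation is shorter, but it outsources the computation and its normalizations to \cite{AI}.

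Two small points of precision.
\begin{enumerate}
\item For $0\neq\beta$ primitive of type $(p,q)$ with $k=p+q\leq n$, the positive form is $\sqrt{-1}^{\,p-q}(-1)^{k(k-1)/2}\beta\wedge\overline\beta\wedge\omega^{n-k}$. Here this gives $-\sqrt{-1}\,\beta\wedge\overline\beta\wedge\omega^{n-3}>0$, so the factor $\sqrt{-1}^{\,k^2}$ you mention does not belong.
\item Definiteness comes from Hodge--Riemann, or equivalently from Weil's formula for $*$ on primitive forms, not from irreducibility. Schur's lemma alone only gives proportionality to $|\beta|^2$, with a constant that could a priori vanish.
\end{enumerate}
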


The conclusion follows from the pointwise identity \cite[formula (2.13)]{AI}, so it does not require compactness.

\section{Invariant forms and averaging}\label{sec:invariant}

Let $G$ be a connected real Lie group, $K\subset G$ a compact
subgroup, and $X=G/K$ a complex manifold with a $G$-invariant
complex structure $J$. Denote the Lie algebras of $G$ and $K$
by $\mathfrak g$ and $\mathfrak k$, respectively, and by
$\operatorname{Ad}$ the adjoint representation. Let $\pi:G\to X$ be the natural projection and put $o=eK$,
where $e$ is the identity of $G$.
Since $K$ is compact, we may choose an
$\operatorname{Ad}(K)$-invariant complement $\mathfrak m$
such that $\mathfrak g=\mathfrak k\oplus\mathfrak m$.
The restriction $d\pi_e|_{\mathfrak m}$ identifies
$\mathfrak m$ with $T_oX$. For a differential form $\rho$
on $X$, we write $\rho_o$ for its value at $o$ and regard
it as an alternating form on $\mathfrak m$ through this
identification. Denote by $\operatorname{pr}_{\mathfrak m}:\mathfrak g
\to\mathfrak m$ the projection with kernel $\mathfrak k$.
For $x,y\in\mathfrak m$, write
$[x,y]_{\mathfrak m}=\operatorname{pr}_{\mathfrak m}[x,y]$.

Denote the spaces of smooth real $q$-forms and complex $(p,q)$-forms on $X$ by $\Omega^q_{\R}(X)$ and $\Omega^{p,q}(X)$, and their $G$-invariant subspaces by $I^q_{\R}$ and $I^{p,q}$. Evaluation at $o$ identifies $I^q_{\R}$ with $(\Lambda^q\mathfrak m^*)^K$, where the superscript $K$ denotes invariance under the isotropy action. In particular, these spaces are finite-dimensional. The operators $d$, $\partial$ and $\overline\partial$ preserve invariance. For an invariant $q$-form $\beta$, the differential at $o$ is
\begin{equation}\label{eq:relative-d}
\begin{aligned}
(d\beta)(x_0,\ldots,x_q)
=\sum_{0\leq i<j\leq q}(-1)^{i+j}
\beta([x_i,x_j]_{\mathfrak m},x_0,\ldots,\widehat x_i,\ldots,\widehat x_j,\ldots,x_q),
\end{aligned}
\end{equation}
where $x_0,\ldots,x_q\in\mathfrak m$, and the hats indicate omitted arguments. Indeed, $\pi^*\beta$ is left invariant on $G$, right $K$-invariant and \emph{horizontal}, vanishing on tangent vectors to the fibres. The Chevalley--Eilenberg formula on $\mathfrak g$ therefore gives \eqref{eq:relative-d}. This relative differential does not require $\mathfrak m$ to be a Lie subalgebra.

Since $K$ is compact, averaging a Hermitian inner product
on $\mathfrak m$ over $K$ yields a $K$-invariant Hermitian
inner product. It extends to a $G$-invariant Hermitian
metric on $X$, and hence determines a positive
$G$-invariant volume form $\nu$. The identifications above are made at $o=eK$ and do not require a global $G$-invariant frame on $X$.

\begin{lemma}\label{lem:formal-stokes}
Suppose that $G$ is connected and unimodular, $K$ is compact, and $X=G/K$ has a $G$-invariant complex structure. Let $N=\dim_{\R}X$ and fix a positive invariant volume form $\nu$. Then $I^N_{\R}=\R\nu$ and $dI^{N-1}_{\R}=0$. Thus the linear functional $\ell_\nu:I^N_{\R}\to\R$ given by $\ell_\nu(c\nu)=c$ satisfies $\ell_\nu(d\beta)=0$ for all $\beta\in I^{N-1}_{\R}$.
\end{lemma}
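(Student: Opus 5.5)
We identify $I^N_{\R}$ with $(\Lambda^N\mathfrak m^*)^K$ by evaluating at $o$. Since $\Lambda^N\mathfrak m^*$ is one-dimensional and $\nu_o\neq0$ is $K$-invariant, every invariant top form is a constant multiple of $\nu$, because both are $G$-invariant and agree at $o$. Hence $I^N_{\R}=\R\nu$. Given this, $\ell_\nu$ is well defined and linear. The statement $\ell_\nu(d\beta)=0$ is then equivalent to $dI^{N-1}_{\R}=0$, and this is the substance of the lemma.

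To show $d\beta=0$ for $\beta\in I^{N-1}_{\R}$, I would compute at $o$ using \eqref{eq:relative-d}. Choose a basis $x_1,\ldots,x_N$ of $\mathfrak m$ and use contraction: every element of $\Lambda^{N-1}\mathfrak m^*$ has the form $\iota_v\nu_o$ for a unique $v\in\mathfrak m$. Then \eqref{eq:relative-d} gives
\begin{equation*}
(d\iota_v\nu)(x_1,\ldots,x_N)=\pm\,\tr\bigl(\operatorname{pr}_{\mathfrak m}\circ\operatorname{ad}_v|_{\mathfrak m}\bigr)\,\nu_o(x_1,\ldots,x_N).
\end{equation*}
This is the standard Chevalley--Eilenberg computation: $d$ on top-minus-one forms is dual to $v\mapsto\tr(\operatorname{ad}_v)$. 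The computation only uses $\mathfrak m$-components of brackets, so the projection is automatic.

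It remains to show that this trace vanishes when $v$ is $K$-invariant, i.e. $v\in\mathfrak m^K$ (the $K$-invariance of $\beta$ and $\nu$ forces $v$ to be $\operatorname{Ad}(K)$-fixed). This is the main point. Since $K$ fixes $v$, $[\mathfrak k,v]=0$, so $\operatorname{ad}_v$ kills $\mathfrak k$. With respect to $\mathfrak g=\mathfrak k\oplus\mathfrak m$, the matrix of $\operatorname{ad}_v$ therefore has zero first block column. Its trace on $\mathfrak g$ equals the trace of the $\mathfrak m\to\mathfrak m$ block, which is $\tr(\operatorname{pr}_{\mathfrak m}\operatorname{ad}_v|_{\mathfrak m})$. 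Unimodularity gives $\tr\operatorname{ad}_v=0$, so the block trace vanishes. Therefore $(d\beta)_o=0$, and by invariance $d\beta=0$ everywhere.

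The main obstacle is the sign and normalization bookkeeping in the contraction formula. The cleanest route is to pull back to $G$. There $\pi^*\beta$ is a left-invariant horizontal form, and $\pi^*\nu$ extends, by wedging with a basis of $\mathfrak k^*$, to a left-invariant volume form $\mu$ on $G$. Unimodularity is exactly the statement $d(\iota_w\mu)=\pm\tr(\operatorname{ad}_w)\mu=0$. Only the identification of the trace is needed, and the sign is irrelevant since the trace is zero. Connectedness of $G$ is used only to pass from $\tr\operatorname{ad}=0$ to unimodularity, which is given.
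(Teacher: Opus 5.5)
Your proposal is correct and follows the paper's proof: you write $\beta_o=\iota_v\nu_o$, use $K$-invariance to get $\operatorname{Ad}(k)v=v$ and hence $[\mathfrak k,v]=0$, identify $d\beta$ with $-\tr(\operatorname{pr}_{\mathfrak m}\circ\operatorname{ad}_v|_{\mathfrak m})\,\nu$, and conclude from the vanishing $\mathfrak k$-column of $\operatorname{ad}_v$ together with unimodularity. The only difference is cosmetic: the paper obtains the trace formula by pulling back to $G$ and applying Cartan's formula $d\iota_V(\pi^*\nu)=\mathcal L_V(\pi^*\nu)$ to the closed horizontal form $\pi^*\nu$ itself, with no need to complete it to a volume form on $G$; your direct expansion of \eqref{eq:relative-d} is equally valid, because it never uses the Jacobi identity, and it gives the sign $-$.
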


\begin{proof}
An invariant top-degree form is determined by its value at $o$. Since the top exterior power of $T_o^*X$ is 1-dimensional and $\nu_o\neq0$, we have $I^N_{\R}=\R\nu$.

Let $\beta\in I^{N-1}_{\R}$. There exists a unique
$v\in\mathfrak m$ such that $\beta_o=\iota_v\nu_o$, where $\iota_v$ denotes contraction.
By the $K$-invariance of $\beta_o$ and $\nu_o$, we have
$\iota_{\operatorname{Ad}(k)v}\nu_o=\iota_v\nu_o$
for every $k\in K$. Hence $\operatorname{Ad}(k)v=v$,
and in particular $[v,\mathfrak k]=0$.

Let $\pi:G\to X$ be the natural projection, and let $V$ be the left-invariant vector field on $G$ with $V_e=v$. Then
$$
\pi^*\beta=\iota_V(\pi^*\nu).
$$
Since $\nu$ is a top-degree form on $X$, we have
$d(\pi^*\nu)=0$. Thus Cartan's formula gives
$$
\pi^*(d\beta)
=d\iota_V(\pi^*\nu)
=\mathcal L_V(\pi^*\nu).
$$
Writing $\mathcal L_V$ for the Lie derivative and evaluating at $e$, left invariance gives
$$
\bigl(\mathcal L_V(\pi^*\nu)\bigr)_e(x_1,\ldots,x_N)
=-\sum_{j=1}^N
(\pi^*\nu)_e(x_1,\ldots,[v,x_j],\ldots,x_N),
\qquad x_j\in\mathfrak m.
$$
Since $\pi^*\nu$ is horizontal, the
$\mathfrak k$-components of the brackets do not
contribute. For $B=\operatorname{pr}_{\mathfrak m}\circ\allowbreak\operatorname{ad}_v|_{\mathfrak m}$, the trace identity
$$
\sum_{j=1}^N\nu_o(x_1,\ldots,Bx_j,\ldots,x_N)
=(\tr B)\nu_o(x_1,\ldots,x_N)
$$
therefore yields
\begin{equation}\label{eq:formal-trace}
d\beta=-\tr\bigl(\operatorname{pr}_{\mathfrak m}\circ\operatorname{ad}_v|_{\mathfrak m}\bigr)\nu.
\end{equation}
Since $[v,\mathfrak k]=0$, the trace on the
right-hand side equals $\tr(\operatorname{ad}_v)$,
which vanishes by unimodularity. Therefore
$d\beta=0$, and the assertion about $\ell_\nu$
follows.
\end{proof}

We call the identity in Lemma \ref{lem:formal-stokes}
the \emph{formal Stokes identity}. The functional $\ell_\nu$ extracts a top-degree coefficient rather than an integral over $X$, so it also applies when $X$ is noncompact. For $K=\{e\}$, the identity is $d(\iota_v\nu)=-\tr(\operatorname{ad}_v)\nu=0$.

For the compact-quotient conclusion, we use the following averaging result of Kwong \cite[Proposition A.1]{Kwong}. Its Lie group version can be found in \cite{FG} and \cite[Lemma 3.2]{EFV}.

\begin{lemma}[\cite{Kwong}]\label{lem:average}
Let $G$ be a connected real Lie group, $K\subset G$ compact, and $X=G/K$ a complex $n$-manifold on which $G$ acts holomorphically. Let $\Gamma\subset G$ be discrete and act freely, properly discontinuously and cocompactly on $X$ by left translations, and put $M=X/\Gamma$. Then $G$ is unimodular. For each integer $0\leq q\leq 2n$, there is a real linear map
$$
\mathsf S:\Omega^q_{\R}(M)\longrightarrow I^q_{\R}
$$
which commutes with $d$ and, after complexification, preserves types and commutes with $\partial$ and $\overline\partial$. If $p:X\to M$ is the quotient map and $p^*\rho$ is $G$-invariant, then $\mathsf S\rho=p^*\rho$. The operator preserves positivity of real $(1,1)$-forms. In particular, it takes a Hermitian-symplectic form on $M$ to a $G$-invariant Hermitian-symplectic form on $X$ which descends to $M$.
\end{lemma}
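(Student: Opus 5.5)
The plan is to realize $\mathsf S$ as an average of left translates over the compact space $\Gamma\backslash G$ against a right-invariant probability measure. All the stated properties then follow because $d$, types and positivity are compatible with pullback by holomorphic maps and with integration. Unimodularity will come from the existence of that finite measure.

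\emph{Step 1: unimodularity and the measure.} Since $M=\Gamma\backslash G/K$ is compact and $K$ is compact, the space $\Gamma\backslash G$ is compact. It is a fibre bundle over $M$ with compact fibre $K$. Because $\Gamma$ is discrete, a right Haar measure on $G$ descends to a Radon measure $\mu$ on $\Gamma\backslash G$ that is invariant under right translations. Its total mass is finite by compactness, and we normalize it to $1$. Alternatively, one can first build $\mu$ from a $G$-invariant volume form on $X$ and Haar measure on $K$. For $h\in G$, left translation by $h$ multiplies right Haar measure by $\Delta(h)$, where $\Delta$ is the modular function. Transporting this to $\Gamma\backslash G$ shows that a finite measure can only be rescaled by $\Delta(h)$ under a map preserving total mass. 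Hence $\Delta\equiv1$, so $G$ is unimodular. I expect this step to be the main obstacle. The point needing care is checking that the descended measure genuinely transforms by the modular character, and is not merely well defined. If the direct argument gets messy, I would instead invoke the standard fact that a Lie group admitting a cocompact discrete subgroup is unimodular (the mechanism behind \cite{FG,EFV}); passing from $X$ to $G$ costs only the compact factor $K$.

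\emph{Step 2: definition of $\mathsf S$.} For $\rho\in\Omega^q_{\R}(M)$, put $\beta=\pi^*p^*\rho$ on $G$. This form is left $\Gamma$-invariant, right $K$-invariant and horizontal. For $g\in G$ the form $L_g^*\beta$ depends only on the coset $\Gamma g$, since $L_{\gamma g}^*=L_g^*L_\gamma^*$. Hence we may set
$$
\widetilde{\mathsf S}\beta=\int_{\Gamma\backslash G}L_g^*\beta\,d\mu(\Gamma g),
$$
integrating coefficientwise in a fixed left-invariant coframe; the integrand is continuous on the compact base. By right invariance of $\mu$ we get $L_h^*\widetilde{\mathsf S}\beta=\int L_{gh}^*\beta\,d\mu=\widetilde{\mathsf S}\beta$, so $\widetilde{\mathsf S}\beta$ is left invariant. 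Since left and right translations commute, each $L_g^*\beta$ remains right $K$-invariant and horizontal, and so does the average. Thus $\widetilde{\mathsf S}\beta=\pi^*\sigma$ for a unique $\sigma\in I^q_{\R}$, and we define $\mathsf S\rho=\sigma$.

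\emph{Step 3: properties.} Pullback and $d$ commute, and $d$ can be taken under the integral because everything is smooth in $g$ and the domain is compact; hence $d\mathsf S=\mathsf S d$. Since $G$ acts holomorphically on $X$, each left translation $L_g$ of $X$ preserves $(p,q)$-types. Therefore the average also preserves types and commutes with $\partial$ and $\overline\partial$, which are the type components of $d$. If $\rho$ is a real $(1,1)$-form with $\rho>0$, then each $L_g^*p^*\rho$ is positive at $o$. A probability average of positive definite forms on the finite-dimensional space $\mathfrak m$ is again positive definite, so $\mathsf S\rho>0$. If $p^*\rho$ is $G$-invariant, every integrand equals $\beta$ and $\mu$ has mass $1$, so $\mathsf S\rho=p^*\rho$. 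Finally, a Hermitian-symplectic $\Omega$ on $M$ is sent to a closed form, since $\mathsf S$ commutes with $d$, whose $(1,1)$-part $\mathsf S(\Omega^{1,1})$ is positive. This invariant form descends to $M$ because $\Gamma\subset G$ and the form is $G$-invariant.
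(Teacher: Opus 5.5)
Your construction is the one the paper uses. The paper quotes Kwong's Proposition A.1 and takes $\mathsf S$ to be the average \eqref{eq:average} of left translates over $\Gamma\backslash G$ against a right-invariant probability measure. Your Steps 2 and 3 are fine: left invariance of the average from right invariance of $\mu$, basicness, commuting with $d$, type preservation from holomorphy of the $L_g$, positivity, and the reproducing property. The problem is Step 1, which as written does not work, for two reasons.

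\emph{The wrong Haar measure.} Discreteness of $\Gamma$ does not let a \emph{right} Haar measure descend to $\Gamma\backslash G$. Descent along $G\to\Gamma\backslash G$ requires invariance under \emph{left} translation by $\Gamma$. A right Haar measure has this only when $\Delta|_\Gamma\equiv1$, which is part of what you are trying to prove.

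\emph{The wrong translations.} Left translation by $h\in G$ induces the assignment $\Gamma g\mapsto\Gamma hg$ only when $h\Gamma h^{-1}\subset\Gamma$. For general $h$ there is no map of $\Gamma\backslash G$ to which the scaling by $\Delta(h)$ could be ``transported''. Your own remark that this step needs care points at exactly this.

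The fix is to swap left and right. Let $m$ be a left Haar measure, with $\Delta$ defined by $m(Eh)=\Delta(h)m(E)$.
\begin{itemize}
\item Since $m$ is left $\Gamma$-invariant, it descends to a Radon measure $\bar m$ on $\Gamma\backslash G$, characterized by $\int_G f\,dm=\int_{\Gamma\backslash G}\sum_{\gamma\in\Gamma}f(\gamma g)\,d\bar m(\Gamma g)$ for $f$ continuous with compact support.
\item Right translation $\Gamma g\mapsto\Gamma gh$ is a well-defined homeomorphism of $\Gamma\backslash G$. Lifting small sets shows $\bar m(Ah)=\Delta(h)\,\bar m(A)$.
\item Take $A=\Gamma\backslash G$, which is compact, so $0<\bar m(A)<\infty$. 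This forces $\Delta(h)=1$ for all $h$.
\end{itemize}
Hence $G$ is unimodular, $\bar m$ is right invariant, and normalizing it gives the $\mu$ needed in Step 2.

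Your alternative construction, from a $G$-invariant volume form on $X$ and Haar measure on $K$, produces exactly this descended \emph{left} Haar measure. It is the correct starting point, provided you then use right translations. Your fallback also closes the gap: cite the standard fact that a Lie group with a cocompact discrete subgroup is unimodular. Haar measure is then bi-invariant and does descend to a right-invariant measure.
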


For use with our quotient notation, let $p:X\to M$ be the quotient map, put $\widehat\rho=p^*\rho$, and write $L_a(x)=a\cdot x$. As shown in the proof of \cite[Proposition A.1]{Kwong}, $\Gamma\backslash G$ is compact and has a right-invariant measure $d\mu$ of total mass one. The operator in that proposition is
\begin{equation}\label{eq:average}
(\mathsf S\rho)_x=
\int_{\Gamma\backslash G}(L_a^*\widehat\rho)_x\,d\mu(\Gamma a),
\qquad x\in X.
\end{equation}

We shall use only the properties of $\mathsf S$
stated above. In particular, averaging need not
preserve de Rham cohomology classes. The closed
lift used in the proof below is fixed after
averaging. On a noncompact homogeneous space,
an \emph{invariant Hermitian-symplectic metric} will
mean an invariant metric admitting an invariant
closed real lift.

Let $\mathcal H=I^{1,1}_{\R}=I^{1,1}\cap I^2_{\R}$
be the space of $G$-invariant real $(1,1)$-forms on $X$,
and put $\mathcal P=\{\omega\in\mathcal H:\omega>0\}$.
Following \cite{Kwong}, define the space of
\emph{invariant Aeppli directions} by
\begin{equation}\label{eq:aeppli-directions}
\mathcal A=\{(d\xi)^{1,1}:\xi\in I^1_{\R}\}
=\{\overline\partial\beta+\partial\overline\beta:\beta\in I^{1,0}\}.
\end{equation}
For $\omega_0\in\mathcal P$, its \emph{invariant Aeppli slice} is $\mathcal S(\omega_0)=(\omega_0+\mathcal A)\cap\mathcal P$.

Recall that the Aeppli cohomology in bidegree
$(1,1)$ is defined by
$$
H_A^{1,1}(X)=
\frac{\ker(\partial\overline\partial:\Omega^{1,1}(X)\to\Omega^{2,2}(X))}
{\partial\Omega^{0,1}(X)+\overline\partial\Omega^{1,0}(X)}.
$$
Every element of $\mathcal A$ is Aeppli-exact.
Conversely, on a compact quotient $M=X/\Gamma$,
if a $G$-invariant real $(1,1)$-form descends to
an Aeppli-exact form on $M$, the averaging
operator in Lemma \ref{lem:average} provides
an invariant real 1-form $\xi$ such that $u=(d\xi)^{1,1}$,
where $u$ is the lifted $(1,1)$-form. Without a compact
quotient, our invariant Aeppli slice is defined
using invariant 1-forms, and no identification
with the full space of Aeppli-exact directions
is assumed.

\section{Proof of the main results}\label{sec:proof}

Throughout this section, $G$ is connected and unimodular,
$K$ is compact, and $X=G/K$ has the invariant complex
structure of Theorem \ref{thm:invariant}. Fix a positive
$G$-invariant volume form $\nu$, and let $\ell_\nu$
be the functional defined in Lemma \ref{lem:formal-stokes},
extended complex linearly when necessary. For
$\omega\in\mathcal H$, define
$$
\mathcal V(\omega)=\frac{1}{n!}\ell_\nu(\omega^n).
$$
If $M=X/\Gamma$ is a compact quotient, then
$$
\frac{1}{n!}\int_M\omega^n
=\mathcal V(\omega)\int_M\nu.
$$
Thus $\mathcal V$ differs from the Hermitian volume by a fixed positive factor. The definition of $\mathcal V$
does not require $X$ to be compact.

\begin{lemma}\label{lem:closed-lifts}
Let $\Omega_0$ be an invariant Hermitian-symplectic form, put $\omega_0=\Omega_0^{1,1}$, and set $\mathcal S=\mathcal S(\omega_0)$. For every $\omega\in\mathcal S$ there is an invariant real 1-form $\xi$ for which $\Omega_\xi=\Omega_0+d\xi$ is a closed lift of $\omega$. Every metric in $\mathcal S$ is Hermitian-symplectic and pluriclosed. Moreover, with $C_0=\ell_\nu(\Omega_0^n)/n!$,
\begin{equation}\label{eq:volume-bound}
0<\mathcal V(\omega)\leq C_0
\qquad\text{for every }\omega\in\mathcal S.
\end{equation}
\end{lemma}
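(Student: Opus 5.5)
Given $\omega\in\mathcal S$, by definition of the slice there is $\xi\in I^1_{\R}$ with $\omega=\omega_0+(d\xi)^{1,1}$. I will set $\Omega_\xi=\Omega_0+d\xi$. It is an invariant real 2-form, and it is closed because $\Omega_0$ is closed. Its $(1,1)$-part is $\Omega_0^{1,1}+(d\xi)^{1,1}=\omega$, so it is a closed real lift of $\omega$. Since $\omega>0$, the form $\Omega_\xi$ is an invariant Hermitian-symplectic form, and therefore $\omega$ is an invariant Hermitian-symplectic metric in the sense fixed at the end of Section~\ref{sec:invariant}. Pluriclosedness then follows from the system \eqref{eq:lift-system}: applying $\overline\partial$ to $\partial\omega+\overline\partial\alpha=0$ gives $\partial\overline\partial\omega=0$.

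For the volume bound, I will write $\Omega_\xi=\omega+\alpha+\overline\alpha$ with $\alpha=\Omega_\xi^{2,0}$ and apply Lemma~\ref{lem:volume} pointwise at $o$ (equivalently, at every point). This gives $\Omega_\xi^n\geq\omega^n$ as top-degree forms with respect to the complex orientation. The positive volume form $\nu$ is compatible with the complex orientation, since it can be taken to be $\omega_0^n/n!$ up to a positive constant, and $\ell_\nu$ is monotone. Hence
\[
\ell_\nu(\omega^n)\leq\ell_\nu(\Omega_\xi^n).
\]
Moreover $\omega^n$ is a positive multiple of $\nu$, so $\mathcal V(\omega)>0$.

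It remains to show $\ell_\nu(\Omega_\xi^n)=\ell_\nu(\Omega_0^n)$, and this is the only step with real content. I will expand
\[
(\Omega_0+d\xi)^n-\Omega_0^n=\sum_{k\geq1}\binom nk\,\Omega_0^{n-k}\wedge(d\xi)^k .
\]
Because $\Omega_0$ and $d\xi$ are closed, each term with $k\geq1$ is exact:
\[
\Omega_0^{n-k}\wedge(d\xi)^k=d\bigl(\xi\wedge\Omega_0^{n-k}\wedge(d\xi)^{k-1}\bigr).
\]
The primitive is a $G$-invariant real $(2n-1)$-form, since invariance is preserved under wedge products and $d$. Lemma~\ref{lem:formal-stokes} then says $dI^{2n-1}_{\R}=0$, so every correction term vanishes identically, not merely under $\ell_\nu$. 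Thus $\Omega_\xi^n=\Omega_0^n$, and we get $\mathcal V(\omega)\leq\ell_\nu(\Omega_0^n)/n!=C_0$. The same chain applied to $\omega_0$ itself shows $C_0\geq\mathcal V(\omega_0)>0$, so the bound is consistent. The point requiring care is the use of unimodularity through the formal Stokes identity, together with checking the orientation convention in Lemma~\ref{lem:volume} against the sign of $\nu$.
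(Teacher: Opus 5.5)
Your proof is correct and follows the paper's argument. You take the closed lift $\Omega_\xi=\Omega_0+d\xi$, show that $\Omega_\xi^n-\Omega_0^n$ is exact with an invariant $(2n-1)$-form primitive killed by Lemma~\ref{lem:formal-stokes}, and bound $\Omega_\xi^n\geq\omega^n$ by Lemma~\ref{lem:volume}. Your binomial primitive replaces the paper's telescoping one, and your observation that $dI^{2n-1}_{\R}=0$ actually gives $\Omega_\xi^n=\Omega_0^n$ pointwise, not just under $\ell_\nu$, is a valid and slightly sharper reading of the same lemma.
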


\begin{proof}
Let $\omega\in\mathcal S$. By definition, we can write
$\omega=\omega_0+(d\xi)^{1,1}$ for some
$\xi\in I^1_{\R}$. Set $\Omega_\xi=\Omega_0+d\xi$.
Then $\Omega_\xi$ is an invariant closed real lift of $\omega$.
Thus every metric in $\mathcal S$ is Hermitian-symplectic and is
pluriclosed by \eqref{eq:lift-system}.

Since $\Omega_\xi-\Omega_0=d\xi$ and both forms
are closed, we have
\begin{equation}\label{eq:transgression}
\Omega_\xi^n-\Omega_0^n
=d\left(\xi\wedge\sum_{j=0}^{n-1}
\Omega_\xi^{n-1-j}\wedge\Omega_0^j\right).
\end{equation}
The form inside the derivative is invariant of
degree $2n-1$. By Lemma \ref{lem:formal-stokes},
we obtain $\ell_\nu(\Omega_\xi^n)
=\ell_\nu(\Omega_0^n)$. On the other hand, Lemma \ref{lem:volume} gives $\Omega_\xi^n\geq\omega^n>0$. Consequently,
$$
0<\mathcal V(\omega)
\leq\frac{1}{n!}\ell_\nu(\Omega_0^n)
=C_0,
$$
which proves \eqref{eq:volume-bound}.
\end{proof}

Applying this bound to midpoint metrics gives the following estimate.

\begin{lemma}\label{lem:midpoint}
Under the assumptions of Lemma \ref{lem:closed-lifts}, choose an $\omega_0$-unitary frame at $o$, and define $A_\omega$ by $\omega_o=\sqrt{-1}\sum_{i,j}(A_\omega)_{i\bar j}\varphi_i\wedge\overline\varphi_j$ in its dual coframe. Put $V_0=\mathcal V(\omega_0)$. Then
\begin{equation}\label{eq:trace-bound}
\tr A_\omega\leq B_0:=2^n\frac{C_0}{V_0}-1
\qquad(\omega\in\mathcal S).
\end{equation}
Consequently $\mathcal S$ is bounded in $\mathcal H$.
\end{lemma}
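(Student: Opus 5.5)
The plan is to apply the volume bound \eqref{eq:volume-bound} of Lemma \ref{lem:closed-lifts} not to $\omega$ itself but to the midpoint $\frac12(\omega_0+\omega)$. Since $\mathcal S=(\omega_0+\mathcal A)\cap\mathcal P$ is the intersection of an affine subspace with a convex cone, it is convex. For $\omega\in\mathcal S$ the midpoint $\omega_{1/2}=\frac12(\omega_0+\omega)$ therefore again lies in $\mathcal S$, and Lemma \ref{lem:closed-lifts} gives $\mathcal V(\omega_{1/2})\le C_0$.

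Next I express $\mathcal V$ via determinants at $o$. An invariant top form is determined by its value at $o$, so $\mathcal V(\eta)=\det(A_\eta)\,\mathcal V(\omega_0)=V_0\det A_\eta$ for every $\eta\in\mathcal P$. This holds because in the $\omega_0$-unitary coframe $\eta_o^n/n!=\det(A_\eta)\,(\omega_0)_o^n/n!$, and $\ell_\nu$ is linear. Since $A_{\omega_{1/2}}=\frac12(I+A_\omega)$, the bound becomes
$$
V_0\,2^{-n}\det(I+A_\omega)\le C_0 .
$$
Let $\lambda_1,\dots,\lambda_n>0$ be the eigenvalues of the positive Hermitian matrix $A_\omega$. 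Expanding the product,
$$
\det(I+A_\omega)=\prod_i(1+\lambda_i)\ge 1+\sum_i\lambda_i=1+\tr A_\omega ,
$$
because every term of the expansion is nonnegative. Combining the two inequalities gives $\tr A_\omega\le 2^nC_0/V_0-1=B_0$, which is \eqref{eq:trace-bound}. This is the key point: the determinant bound alone says nothing about the individual eigenvalues, but the shift by the identity converts it into a bound on the linear terms.

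For boundedness, note that $A_\omega$ is positive definite with trace at most $B_0$. Hence every eigenvalue lies in $(0,B_0]$, so the operator norm of $A_\omega$ is at most $B_0$. The entries $|(A_\omega)_{i\bar j}|$ are then bounded by $B_0$ as well, for instance via $|a_{i\bar j}|^2\le a_{i\bar i}a_{j\bar j}$. Evaluation at $o$ identifies $\mathcal H$ with a finite-dimensional space of Hermitian matrices, injectively, so $\mathcal S$ is bounded in $\mathcal H$.

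The main obstacle is the identity $\mathcal V(\eta)=V_0\det A_\eta$, which requires some care with the normalization of $\ell_\nu$. The cleanest route is to write $\eta_o^n=\det(A_\eta)(\omega_0)_o^n$ pointwise and use $G$-invariance to extend this to the forms on $X$. The remaining steps are elementary.
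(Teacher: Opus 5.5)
Your proposal is correct and matches the paper's proof essentially step by step. You apply the volume bound to the midpoint $\frac12(\omega_0+\omega)$, use $\mathcal V=V_0\det A$, expand $\prod_j(1+\lambda_j)\ge 1+\sum_j\lambda_j$, and bound the operator norm by the trace; the only cosmetic difference is that you justify $\omega_{1/2}\in\mathcal S$ by convexity of $(\omega_0+\mathcal A)\cap\mathcal P$, whereas the paper exhibits the closed lift $\Omega_0+\frac12 d\xi$ directly.
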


\begin{proof}
In the chosen frame, $\omega_0$ has matrix $I_n$, the identity matrix. Since the ratio of invariant top-degree forms is
constant on $X$, we have
\begin{equation}\label{eq:det-volume}
\omega^n=\det(A_\omega)\omega_0^n,
\qquad
\mathcal V(\omega)=\det(A_\omega)V_0.
\end{equation}
For $\omega=\omega_0+(d\xi)^{1,1}\in\mathcal S$,
the midpoint
$\omega_{1/2}=\frac12(\omega_0+\omega)$
lies in $\mathcal S$, with closed lift
$\Omega_0+\frac12d\xi$.
Let $\lambda_1,\ldots,\lambda_n>0$ be the
eigenvalues of $A_\omega$.
By \eqref{eq:volume-bound}, we obtain
$$
\begin{aligned}
\frac{C_0}{V_0}
&\geq\frac{\mathcal V(\omega_{1/2})}{V_0}
 =\det\left(\frac{I_n+A_\omega}{2}\right)=2^{-n}\prod_{j=1}^n(1+\lambda_j)
 \geq2^{-n}\left(1+\sum_{j=1}^n\lambda_j\right).
\end{aligned}
$$
Hence
$$
\tr A_\omega
\leq 2^n\frac{C_0}{V_0}-1,
$$
which proves \eqref{eq:trace-bound}. Since $A_\omega$ is positive definite,
$\|A_\omega\|_{\mathrm{op}}\leq\tr A_\omega$,
where $\|\cdot\|_{\mathrm{op}}$ is the operator norm in the chosen unitary frame.
Evaluation at $o$ embeds $\mathcal H$
into the finite-dimensional space of Hermitian
matrices. Therefore $\mathcal S$ is bounded.
\end{proof}

\begin{remark}\label{rem:semipositive}
Boundedness can also be proved by excluding nonzero semipositive
directions in $\mathcal A$. Suppose that $0\neq u\in\mathcal A$
is semipositive, and let $U$ be its matrix relative to $\omega_0$.
Then $\omega_0+tu\in\mathcal S$ for all $t\geq0$.
Since $U\geq0$ has at least one positive eigenvalue,
$\mathcal V(\omega_0+tu)=\det(I_n+tU)V_0\to\infty$
as $t\to\infty$, contradicting \eqref{eq:volume-bound}.

If $\mathcal S$ were unbounded, we could choose $u_j\in\mathcal A$
with $\omega_0+u_j>0$ and $\|U_j\|_{\mathrm{op}}\to\infty$,
where $U_j$ is the matrix of $u_j$ in the same frame.
After passing to a subsequence,
$U_j/\|U_j\|_{\mathrm{op}}$ converges to a matrix $U$ with
$\|U\|_{\mathrm{op}}=1$. The matrix image of $\mathcal A$
is finite-dimensional and hence closed, so $U$ belongs to this
image. Dividing $I_n+U_j>0$ by $\|U_j\|_{\mathrm{op}}$
and taking the limit gives $U\geq0$, contradicting the
first part. Thus $\mathcal S$ is bounded.

A determinant bound alone does not control the matrix norm,
as $\operatorname{diag}(t,t^{-1})$ shows. The argument above
uses the uniform bound \eqref{eq:volume-bound} on the whole slice.
\end{remark}

The next proposition extends Kwong's volume argument \cite[Proposition 2.1 and Lemma 2.4]{Kwong} with $\ell_\nu$
in place of integration. We give the proof in this form because no compact quotient is assumed here.

\begin{proposition}[{after Kwong,
\cite{Kwong}}]\label{prop:critical}
Let $G$ be a connected unimodular real Lie group,
$K\subset G$ a compact subgroup, and $X=G/K$ a complex
manifold of complex dimension $n\geq2$ with a $G$-invariant
complex structure. Fix a positive invariant volume form $\nu$,
and define $\mathcal H$, $\mathcal P$, $\mathcal A$ and
$\mathcal V$ as above. Let $\omega\in\mathcal P$.
Then $\omega$ is balanced if and only if it is a critical
point of $\mathcal V$ on its invariant Aeppli slice.
If this slice is bounded, it contains a unique critical point,
which is also the unique point at which $\mathcal V$ attains
its maximum.
\end{proposition}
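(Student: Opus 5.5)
The plan is to compute the first variation of $\mathcal V$ along Aeppli directions, identify it with a pairing against $\eta$ via the formal Stokes identity, and then use log-concavity of the determinant on the convex slice to obtain existence, uniqueness and maximality.

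\emph{First variation.} Take $\omega\in\mathcal P$ and a direction $u=(d\xi)^{1,1}=\overline\partial\beta+\partial\overline\beta$ with $\beta\in I^{1,0}$. Then
$$
\frac{d}{dt}\Big|_{t=0}\mathcal V(\omega+tu)=\frac{1}{(n-1)!}\ell_\nu\bigl(u\wedge\omega^{n-1}\bigr).
$$
Since $\omega^{n-1}$ is of type $(n-1,n-1)$, only the $(1,1)$-part of $d\xi$ contributes, so
$$
u\wedge\omega^{n-1}=d\xi\wedge\omega^{n-1}=d(\xi\wedge\omega^{n-1})+\xi\wedge d\omega^{n-1}.
$$
The form $\xi\wedge\omega^{n-1}$ is invariant of degree $2n-1$, so Lemma \ref{lem:formal-stokes} kills the exact term. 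Hence the derivative equals $\frac{1}{(n-1)!}\ell_\nu(\xi\wedge d\omega^{n-1})$. If $\omega$ is balanced, this vanishes for every $\xi$, so $\omega$ is critical.

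For the converse, use $\partial\omega^{n-1}=-\eta\wedge\omega^{n-1}$, which gives
$$
d\omega^{n-1}=-\eta\wedge\omega^{n-1}-\overline\eta\wedge\omega^{n-1}.
$$
Choose $\xi=\sqrt{-1}(\overline\eta-\eta)$ or a similar real combination; $\eta$ is invariant because $\omega$ is. The aim is to make $\xi\wedge d\omega^{n-1}$ a nonzero multiple of $\sqrt{-1}\,\eta\wedge\overline\eta\wedge\omega^{n-1}$, which is a positive multiple of $|\eta|^2\omega^n$. The derivative is then $c\,\ell_\nu(|\eta|^2\omega^n)$ with $c\neq0$, where $|\eta|^2$ is constant, so criticality forces $\eta=0$. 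The main thing to check is sign bookkeeping and the choice of real $\xi$ so that the $\eta\wedge\eta$ and $\overline\eta\wedge\overline\eta$ terms drop out for type reasons; they do, since $\eta\wedge\eta=0$.

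\emph{Concavity.} Along the slice, $\mathcal V(\omega)=\det(A_\omega)V_0$ by \eqref{eq:det-volume}, with $A_\omega$ affine in $\omega$. The slice $\mathcal S$ is convex, being an affine subspace intersected with the positive cone, and $\log\det$ is strictly concave on positive Hermitian matrices. Evaluation at $o$ is injective on $\mathcal H$, so $\log\mathcal V$ is strictly concave on $\mathcal S$, and any critical point is the unique global maximizer.

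\emph{Existence.} When $\mathcal S$ is bounded, its closure in the finite-dimensional affine space $\omega+\mathcal A$ is compact. On the boundary points that are not in $\mathcal P$, $\det A=0$, so $\mathcal V$ extends continuously to the closure with value $0$ there. Since $\mathcal V>0$ inside, the maximum is attained at an interior point of $\mathcal S$, relatively open in $\omega+\mathcal A$. That point is a critical point, and by strict concavity it is unique. I expect the main obstacle to be the sign computation in the converse direction; the rest is routine.
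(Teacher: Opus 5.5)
Your proposal is correct and follows the paper's proof: the same first variation via the formal Stokes identity, existence from compactness of the closure with $\mathcal V=0$ on the relative boundary, and uniqueness from strict concavity of $\log\det$. The only cosmetic difference is the test direction in the converse: you use $\xi=\sqrt{-1}(\overline\eta-\eta)$, which is a nonzero constant multiple of the paper's $-*d\omega^{n-1}$, and it gives $\xi\wedge d\omega^{n-1}=2\sqrt{-1}\,\eta\wedge\overline\eta\wedge\omega^{n-1}=\tfrac{2}{n}|\eta|^2\omega^n$, which settles the sign bookkeeping you flagged.
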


\begin{proof}
For a variation $u\in\mathcal H$, the derivative of $\mathcal V$ at $\omega$ is
$$
(d\mathcal V)_\omega(u)=\frac1{(n-1)!}
             \ell_\nu(u\wedge\omega^{n-1}).
$$
If $u=(d\xi)^{1,1}\in\mathcal A$, only the $(1,1)$-part of $d\xi$ contributes. By Lemma \ref{lem:formal-stokes}, we have
\begin{equation}\label{eq:variation}
(n-1)!(d\mathcal V)_\omega((d\xi)^{1,1})
=\ell_\nu(d\xi\wedge\omega^{n-1})
=\ell_\nu(\xi\wedge d\omega^{n-1}).
\end{equation}
Thus a balanced metric is a critical point of $\mathcal V$ on its invariant Aeppli slice.

Conversely, suppose that $\omega$ is a critical
point, and put $\zeta=d\omega^{n-1}$. Let $g$ be the Hermitian metric associated with $\omega$. The \emph{Hodge star} $*$ of $g$ preserves invariant forms, since the $G$-action is by holomorphic isometries. In particular, $\xi=-*\zeta$ lies in $I^1_{\R}$. With $*$ and $\xi$ fixed at $\omega$, substitute this direction in \eqref{eq:variation} to obtain
$$
0=\ell_\nu((-*\zeta)\wedge\zeta)
  =\ell_\nu(\zeta\wedge*\zeta)
  =|\zeta_o|_g^2\,\ell_\nu(\nu_\omega).
$$
Here $|\cdot|_g$ is the pointwise norm on forms induced by $g$.
Since $\ell_\nu(\nu_\omega)>0$, we have $\zeta_o=0$. By invariance, $\zeta=0$ on $X$, so $\omega$ is balanced.

Now suppose that the slice is bounded. Its closure in $\omega+\mathcal A$ is compact. Every point of its relative boundary has a singular semipositive matrix, and hence zero
volume. Since $\mathcal V(\omega)>0$, the
maximum is attained in the interior.
Such a maximum is necessarily a critical point.

To prove uniqueness, choose a fixed unitary frame at $o$. For every positive form in $\mathcal H$, its top-degree coefficient is a fixed positive multiple of the determinant of its matrix. Thus $\log\mathcal V$ differs from $\log\det$ by an additive constant. Along a line of positive Hermitian matrices $A+tD$, with $D=D^*$ and $D^*={}^t\overline D$,
\begin{equation}\label{eq:hessian}
\frac{d^2}{dt^2}\log\det(A+tD)
=-\tr\bigl((A+tD)^{-1}D(A+tD)^{-1}D\bigr).
\end{equation}
When $D\neq0$, the matrix $B=(A+tD)^{-1/2}D(A+tD)^{-1/2}$ is nonzero and Hermitian, and the trace on the right equals $\tr(B^2)>0$. Thus $\log\mathcal V$ is strictly concave on nonconstant segments. Two distinct critical points would give zero derivatives at both endpoints of such a segment, which is impossible. The critical point, and hence the maximum, is unique.
\end{proof}

\begin{proof}[Proof of Theorem \ref{thm:invariant}]
Let $\Omega_0$ be the given $G$-invariant Hermitian-symplectic
form on $X$, and put $\omega_0=\Omega_0^{1,1}$.
Since $\omega_0>0$, the invariant Aeppli slice
$\mathcal S(\omega_0)$ is nonempty.
By Lemma \ref{lem:closed-lifts}, every metric
in this slice has an invariant closed real lift and is
pluriclosed. The same lemma gives the uniform volume bound
\eqref{eq:volume-bound}, from which Lemma \ref{lem:midpoint}
shows that $\mathcal S(\omega_0)$ is bounded.

By Proposition \ref{prop:critical}, the function $\mathcal V$
attains its maximum at a balanced metric
$\omega_*\in\mathcal S(\omega_0)$.
Since $\omega_*$ is also pluriclosed, we have
$d\omega_*^{n-1}=0$ and
$\partial\overline\partial\omega_*=0$.
Lemma \ref{lem:balanced-skt} therefore gives $d\omega_*=0$.
Thus the positive invariant real $(1,1)$-form $\omega_*$
defines a $G$-invariant K\"ahler metric
$g_*(x,y)=\omega_*(x,Jy)$.

Finally, any balanced metric in $\mathcal S(\omega_0)$
is a critical point of $\mathcal V$ by
Proposition \ref{prop:critical}. The uniqueness of the
critical point implies that this metric must be $\omega_*$.
This proves the uniqueness assertion.
\end{proof}

\begin{proof}[Proof of Corollary \ref{cor:quotient}]
Let $\Omega$ be a Hermitian-symplectic form on $M$, and let
$p:X\to M$ be the quotient map. By Lemma \ref{lem:average},
the group $G$ is unimodular and $\Omega_0=\mathsf S\Omega$
is a $G$-invariant Hermitian-symplectic form on $X$,
where $\mathsf S$ is the averaging operator defined in
that lemma. Theorem \ref{thm:invariant} therefore gives
a $G$-invariant K\"ahler form $\omega_*$ on $X$.

Since $\Gamma\subset G$, the form $\omega_*$ is
$\Gamma$-invariant and hence descends to a real $(1,1)$-form
$\widetilde\omega$ on $M$ satisfying
$p^*\widetilde\omega=\omega_*$. As $p$ is a holomorphic
covering map, $\widetilde\omega$ is positive and closed.
Thus $\widetilde\omega$ defines a K\"ahler metric on $M$
whose lift to $X$ is $G$-invariant.
\end{proof}

\begin{proof}[Proof of Corollary \ref{cor:algebra}]
Let $G$ be the connected simply connected Lie group with
Lie algebra $\mathfrak g$. Extend $J$ and $\Omega$ by left
translation, keeping the same notation. Then $J$ is a
left-invariant integrable complex structure and $\Omega$
is a left-invariant Hermitian-symplectic form on $G$.
Since $\mathfrak g$ is unimodular, so is $G$.
Applying Theorem \ref{thm:invariant} with $K=\{e\}$,
we obtain a left-invariant K\"ahler form $\omega_*$ on $G$.

Let $\widetilde\omega$ be the value of $\omega_*$ at the
identity $e$. It is a positive real $(1,1)$-form on
$(\mathfrak g,J)$, and $d_{\mathfrak g}\widetilde\omega=0$
since $d\omega_*=0$. Thus $\widetilde\omega$ defines the
required K\"ahler inner product on $(\mathfrak g,J)$.
\end{proof}

\begin{remark}\label{rem:nondegenerate}
Let $\omega_j$ be a maximizing sequence for $\mathcal V$
in $\mathcal S(\omega_0)$. Since
$\sup_{\mathcal S(\omega_0)}\mathcal V\geq V_0$, we have
$\mathcal V(\omega_j)\geq V_0/2$ for all sufficiently large $j$.
Thus $\det A_{\omega_j}\geq1/2$.
By \eqref{eq:trace-bound}, every eigenvalue of $A_{\omega_j}$
is at most $B_0$. Their product is at least $1/2$, so the
smallest eigenvalue is at least $(2B_0^{n-1})^{-1}$. Hence
$$
\frac{1}{2B_0^{n-1}}I_n
\leq A_{\omega_j}\leq B_0I_n
$$
for all sufficiently large $j$.

After passing to a subsequence, $A_{\omega_j}$ converges
to a positive definite matrix. Since the invariant forms
belong to a fixed finite-dimensional space, $\omega_j$
converges smoothly on compact subsets of $X$ to a positive
invariant form $\omega_\infty$. Moreover,
$\omega_\infty-\omega_0\in\mathcal A$, since $\mathcal A$
is closed. Thus $\omega_\infty\in\mathcal S(\omega_0)$
and attains the supremum of $\mathcal V$.
When a compact quotient is present, the convergence is
smooth on the whole quotient.

\end{remark}

\begin{remark}\label{rem:classes}
The K\"ahler form $\omega_*$ belongs to
$\mathcal S(\omega_0)$, but need not equal $\omega_0$.
Choose $\xi_*\in I^1_{\R}$ such that
$\omega_*-\omega_0=(d\xi_*)^{1,1}$.
Then $\Omega_*=\Omega_0+d\xi_*$ is an invariant
Hermitian-symplectic form whose $(1,1)$-part is $\omega_*$.
Write $\Omega_*=\omega_*+\alpha_*+\overline{\alpha_*}$,
where $\alpha_*=\Omega_*^{2,0}$.
Since $\Omega_*$ and $\omega_*$ are closed, taking type
components gives $d\alpha_*=0$. Consequently,
$$
[\Omega_0]_{\mathrm{dR}}-[\omega_*]_{\mathrm{dR}}
=
[\alpha_*+\overline{\alpha_*}]_{\mathrm{dR}}.
$$
Thus $\Omega_0$ and $\omega_*$ have the same de Rham class
if and only if $\alpha_*+\overline{\alpha_*}$ is exact.
The theorem does not assert this additional property.

On a compact quotient, the descended average of a
Hermitian-symplectic form need not be cohomologous to
the original form. Compactness is used in
Corollary \ref{cor:quotient} to apply the averaging lemma.
In Theorem \ref{thm:invariant}, the Hermitian-symplectic
form is assumed to be $G$-invariant, and $G$ is assumed
to be unimodular.
\end{remark}

\section{The K\"ahler representative of an invariant slice}\label{sec:representative}

Let $(X=G/K,J)$ be as in Theorem \ref{thm:invariant},
and assume that it admits a $G$-invariant Hermitian-symplectic
form. With $\mathcal H$, $\mathcal P$ and $\mathcal A$
as in Section \ref{sec:invariant}, put
$$
\mathcal L=\{\Omega^{1,1}:\Omega\in I^2_{\R},\ d\Omega=0\},
\qquad
\mathcal Z=\{u\in\mathcal H:du=0\}.
$$
These are real linear subspaces of $\mathcal H$.
The cones $\mathcal L\cap\mathcal P$ and
$\mathcal Z\cap\mathcal P$ consist of the fundamental forms
of invariant Hermitian-symplectic and K\"ahler metrics,
respectively. The first is nonempty by assumption,
and the second by Theorem \ref{thm:invariant}.
We have the following consequence.

\begin{proposition}\label{prop:projection}
Let $G$ be a connected unimodular real Lie group, let $K\subset G$ be a compact subgroup, and let $X=G/K$ have a $G$-invariant integrable complex structure of complex dimension $n\geq2$. Assume that $\mathcal L\cap\mathcal P\neq\varnothing$, with the spaces defined above. Then $\mathcal L=\mathcal Z\oplus\mathcal A$. Let $\mathcal R:\mathcal L\to\mathcal Z$ be the associated linear projection with kernel $\mathcal A$. For every $\omega\in\mathcal L\cap\mathcal P$, the form $\mathcal R(\omega)$ is the unique K\"ahler form in $\mathcal S(\omega)$, and its associated metric is the unique balanced metric in that slice. The projection $\mathcal R$ is independent of the positive invariant volume form used to define $\mathcal V$.
\end{proposition}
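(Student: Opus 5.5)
I will first establish the inclusions $\mathcal Z\subset\mathcal L$ and $\mathcal A\subset\mathcal L$. The first holds because a closed invariant $(1,1)$-form is its own closed lift. For the second, take $u=(d\xi)^{1,1}$ with $\xi\in I^1_{\R}$; then $d\xi$ is an invariant closed real $2$-form whose $(1,1)$-part is $u$. Since $\mathcal L$ is a linear subspace, this gives $\mathcal Z+\mathcal A\subset\mathcal L$.

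Next I will show that the sum is direct, i.e.\ $\mathcal Z\cap\mathcal A=0$. Fix a K\"ahler form $\omega_*\in\mathcal Z\cap\mathcal P$, which exists by Theorem \ref{thm:invariant}, and let $u\in\mathcal Z\cap\mathcal A$. For small $|t|$ the form $\omega_*+tu$ is positive, it is closed, and it lies in $\mathcal S(\omega_*)$ because $u\in\mathcal A$. In particular it is balanced. By the uniqueness part of Theorem \ref{thm:invariant}, applied with the invariant Hermitian-symplectic form $\omega_*$ itself, the slice $\mathcal S(\omega_*)$ contains only one balanced metric. Hence $\omega_*+tu=\omega_*$, so $u=0$.

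For the equality $\mathcal L=\mathcal Z+\mathcal A$ I will first handle the positive elements. Any $\omega\in\mathcal L\cap\mathcal P$ is the $(1,1)$-part of an invariant closed form, so Theorem \ref{thm:invariant} applies to it and yields a K\"ahler form $\omega'\in\mathcal S(\omega)$. Then $\omega=\omega'+(\omega-\omega')\in\mathcal Z+\mathcal A$. A general $\eta\in\mathcal L$ is reduced to this case by fixing $\omega_1\in\mathcal L\cap\mathcal P$ and writing $\eta=(\omega_1+s\eta)/s-\omega_1/s$ for small $s>0$. Both $\omega_1$ and $\omega_1+s\eta$ lie in the open cone $\mathcal L\cap\mathcal P$, so each belongs to $\mathcal Z+\mathcal A$, and therefore so does $\eta$. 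This gives $\mathcal L=\mathcal Z\oplus\mathcal A$, and the projection $\mathcal R$ is well defined.

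It remains to identify $\mathcal R(\omega)$ for $\omega\in\mathcal L\cap\mathcal P$. The decomposition from the previous step reads $\omega=\omega'+a$ with $\omega'\in\mathcal Z$ and $a\in\mathcal A$, and by directness this forces $\mathcal R(\omega)=\omega'$. So $\mathcal R(\omega)$ is exactly the K\"ahler form in $\mathcal S(\omega)$ produced by Theorem \ref{thm:invariant}. Uniqueness of the K\"ahler form in the slice holds because any K\"ahler form is balanced, and the theorem gives a unique balanced metric there. The same statement shows that its metric is the unique balanced metric in the slice.

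Independence from $\nu$ is then immediate, since $\mathcal R$ is defined purely by the linear decomposition, which does not involve $\nu$. The only delicate point is the directness step, which rests on the uniqueness assertion of Theorem \ref{thm:invariant} used at a K\"ahler base point. That assertion is available because a K\"ahler form is trivially Hermitian-symplectic with closed lift equal to itself.
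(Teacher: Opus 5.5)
Your proposal is correct and follows the paper's proof essentially step for step. Directness comes from uniqueness of the balanced metric in $\mathcal S(\kappa)$ at a K\"ahler base point, and the equality $\mathcal L=\mathcal Z+\mathcal A$ comes from applying Theorem \ref{thm:invariant} to a positive perturbation. Your splitting $\eta=\bigl((\omega_1+s\eta)-\omega_1\bigr)/s$ with small $s>0$ is the same device as the paper's splitting $u=(c\kappa+u)-c\kappa$ with large $c>0$.
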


\begin{proof}
Choose an invariant K\"ahler form $\kappa$ by Theorem \ref{thm:invariant}. We have $\mathcal Z\subset\mathcal L$ and $\mathcal A\subset\mathcal L$, since $d\xi$ is a closed real lift of $(d\xi)^{1,1}$. If $u\in\mathcal Z\cap\mathcal A$, then $\kappa+tu$ is positive and closed for all sufficiently small real $t$, and belongs to $\mathcal S(\kappa)$. Both $\kappa$ and $\kappa+tu$ are balanced, so uniqueness in Theorem \ref{thm:invariant} gives $tu=0$. Choosing $t\neq0$ yields $u=0$.

For $u\in\mathcal L$, choose an invariant closed real 2-form
$\Psi$ with $\Psi^{1,1}=u$. For sufficiently large $c>0$,
the form $c\kappa+u$ is positive at $o$, and hence on $X$
by invariance. Thus $c\kappa+\Psi$ is an invariant
Hermitian-symplectic form. By Theorem \ref{thm:invariant}, there exist a $G$-invariant
K\"ahler form $\kappa_u$ and $\xi_u\in I^1_{\R}$ such that
$\kappa_u=c\kappa+u+(d\xi_u)^{1,1}$. Therefore
$$
u=(\kappa_u-c\kappa)-(d\xi_u)^{1,1}.
$$
The first term belongs to $\mathcal Z$, since it is an
invariant closed real $(1,1)$-form, and the second belongs
to $\mathcal A$. Hence $\mathcal L\subset\mathcal Z+\mathcal A$.
The reverse inclusion has already been proved, so
$\mathcal L=\mathcal Z+\mathcal A$. Together with
$\mathcal Z\cap\mathcal A=\{0\}$, this proves the
direct sum decomposition.

For $\omega\in\mathcal L\cap\mathcal P$, let $\omega_*$ be its K\"ahler representative given by Theorem \ref{thm:invariant}. Since $\omega_*\in\mathcal Z$ and $\omega-\omega_*\in\mathcal A$, the direct sum decomposition gives $\omega_*=\mathcal R(\omega)$. Uniqueness follows from the same theorem. Finally, $\mathcal R$ is determined by $\mathcal Z$ and $\mathcal A$, neither of which depends on $\nu$.
\end{proof}

The projection gives the following deformation within each invariant Aeppli slice.

\begin{proposition}\label{prop:deformation}
Let $G$ be a connected unimodular real Lie group, let $K\subset G$ be a compact subgroup, and let $X=G/K$ carry a $G$-invariant integrable complex structure of complex dimension $n\geq2$. Assume that $\mathcal L\cap\mathcal P\neq\varnothing$, and let $\mathcal R$ be the projection of Proposition \ref{prop:projection}. Define
$H:[0,1]\times(\mathcal L\cap\mathcal P)\to\mathcal L\cap\mathcal P$ by
$H(t,\omega)=H_t(\omega)=(1-t)\omega+t\mathcal R(\omega)$.
Then $H_t(\omega)\in\mathcal S(\omega)$ for all $0\leq t\leq1$, with $H_0(\omega)=\omega$, $H_1(\omega)=\mathcal R(\omega)$, and $H_t(\kappa)=\kappa$ for every $\kappa\in\mathcal Z\cap\mathcal P$. In particular, $H$ is a strong deformation retraction of the invariant Hermitian-symplectic cone onto the invariant K\"ahler cone. If $\omega$ is not K\"ahler, then $\mathcal V(H_t(\omega))$ is strictly increasing on $[0,1]$; otherwise the path is constant.
\end{proposition}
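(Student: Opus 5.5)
The plan is to read everything off the decomposition $\mathcal L=\mathcal Z\oplus\mathcal A$ and the properties of $\mathcal R$ from Proposition \ref{prop:projection}, together with the strict log-concavity of $\mathcal V$ from Proposition \ref{prop:critical}.

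First, the slice membership. Fix $\omega\in\mathcal L\cap\mathcal P$. Since $\omega-\mathcal R(\omega)\in\mathcal A$, we have
$$H_t(\omega)-\omega=t(\mathcal R(\omega)-\omega)\in\mathcal A.$$
The point $H_t(\omega)$ is a convex combination of the positive forms $\omega$ and $\mathcal R(\omega)$, the latter positive by Proposition \ref{prop:projection}. Hence $H_t(\omega)\in\mathcal P$, and so $H_t(\omega)\in\mathcal S(\omega)$. It also lies in $\mathcal L$, because $\mathcal L$ is a linear space containing both endpoints. The endpoint values $H_0(\omega)=\omega$ and $H_1(\omega)=\mathcal R(\omega)$ are immediate. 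If $\kappa\in\mathcal Z\cap\mathcal P$, then $\mathcal R(\kappa)=\kappa$ because $\mathcal R$ is the projection onto $\mathcal Z$ along $\mathcal A$, so $H_t(\kappa)=\kappa$ for all $t$.

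Next, continuity and the retraction. $\mathcal R$ is a linear map between finite-dimensional spaces, so $H$ is jointly continuous, indeed polynomial in $(t,\omega)$. The image of $H_1$ is exactly the Kähler cone $\mathcal Z\cap\mathcal P$, since $\mathcal R$ fixes it pointwise. Together with the facts above, this shows $H$ is a strong deformation retraction of $\mathcal L\cap\mathcal P$ onto $\mathcal Z\cap\mathcal P$.

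Finally, monotonicity, which is the only step with real content. If $\omega$ is Kähler, then $\mathcal R(\omega)=\omega$ and the path is constant. Otherwise $\mathcal R(\omega)\neq\omega$, and the path is a nonconstant segment in the slice $\mathcal S(\omega)$. Set $f(t)=\log\mathcal V(H_t(\omega))$. By \eqref{eq:hessian}, $f$ is strictly concave on $[0,1]$.

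By Proposition \ref{prop:projection}, $\mathcal R(\omega)$ is the unique balanced metric in $\mathcal S(\omega)$. By Proposition \ref{prop:critical}, it is a critical point of $\mathcal V$ on the slice. Since the direction of the path lies in $\mathcal A$, this gives $f'(1)=0$.

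Strict concavity means $f'$ is strictly decreasing, so $f'(t)>f'(1)=0$ for $t<1$. Hence $f$, and therefore $\mathcal V(H_t(\omega))$, is strictly increasing on $[0,1]$.

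The only point to check is that the critical-point condition applies at the endpoint $t=1$. It does, because $\mathcal R(\omega)$ lies in the open set $\mathcal P$ and the derivative formula \eqref{eq:variation} holds there for every direction in $\mathcal A$.
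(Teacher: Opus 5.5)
Your proposal is correct and follows the paper's proof essentially step for step: slice membership comes from $\mathcal R(\omega)-\omega\in\mathcal A$ and convexity of $\mathcal P$, and monotonicity comes from strict concavity of $\log\mathcal V$ via \eqref{eq:hessian} together with $f'(1)=0$ at the balanced point $\mathcal R(\omega)$. The only cosmetic difference is that you obtain $H_t(\omega)\in\mathcal L$ from the linearity of $\mathcal L$, whereas the paper writes down the explicit closed lift $\Omega+t\,d\xi$.
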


\begin{proof}
By Proposition \ref{prop:projection}, both $\omega$ and $\mathcal R(\omega)$ are positive, so $H_t(\omega)>0$ for $0\leq t\leq1$. Choose $\xi\in I^1_{\R}$ with $\mathcal R(\omega)-\omega=(d\xi)^{1,1}$. If $\Omega$ is an invariant Hermitian-symplectic form with $\Omega^{1,1}=\omega$, then $\Omega_t=\Omega+t\,d\xi$ is closed and has $(1,1)$-part $H_t(\omega)$. Thus the path remains Hermitian-symplectic and lies in $\mathcal S(\omega)$. Since $\mathcal R$ is linear, $H$ is smooth. The endpoint identities follow from its definition, and $\mathcal R|_{\mathcal Z}$ is the identity, so $H_t$ fixes every K\"ahler form in $\mathcal Z\cap\mathcal P$.

If $\omega$ is not K\"ahler, then
$\mathcal R(\omega)\neq\omega$, so the segment is nonconstant.
Put $f(t)=\log\mathcal V(H_t(\omega))$.
By \eqref{eq:hessian}, we have $f''(t)<0$.
The K\"ahler form $\mathcal R(\omega)$ is a critical point
of $\mathcal V$ on $\mathcal S(\omega)$ by
Proposition \ref{prop:critical}. Since
$\mathcal R(\omega)-\omega\in\mathcal A$, it follows that
$f'(1)=0$. Therefore $f'(t)>0$ for $0\leq t<1$,
and $\mathcal V(H_t(\omega))$ is strictly increasing
on $[0,1]$.

If $\omega$ is K\"ahler, then $\mathcal R(\omega)=\omega$,
so the path is constant.
\end{proof}

\section{Examples}\label{sec:examples}

The first example describes a nontrivial invariant slice; the second shows that a pluriclosed slice need not be bounded.

\begin{example}
On $\R_s\times\R_t\times\C_w$, consider the Lie group $L$ with multiplication
\begin{equation}\label{eq:group-example}
(s,t,w)(s',t',w')=(s+s',t+t',w+e^{2\sqrt{-1}s}w').
\end{equation}
The usual complex structure in $(s+\sqrt{-1}t,w)$ is left invariant, since left translations are complex affine. The Lie algebra has a real basis $S,T,U,V$ with nonzero brackets $[S,U]=2V$ and $[S,V]=-2U$. In particular, $L$ is nonabelian and unimodular. Its left-invariant coframe
$$
\varphi_1=ds+\sqrt{-1}\,dt,
\qquad
\varphi_2=e^{-2\sqrt{-1}s}dw
$$
satisfies
\begin{equation}\label{eq:model-structure}
d\varphi_1=0,
\qquad
d\varphi_2=-\sqrt{-1}(\varphi_1+\overline\varphi_1)\wedge\varphi_2.
\end{equation}
The discrete subgroup $\Gamma_L=(\pi\Z)\times\Z\times(\Z+\sqrt{-1}\Z)$ acts by ordinary translations, since $e^{2\sqrt{-1}\pi k}=1$. In the complex coordinates $(s+\sqrt{-1}t,w)$, this action is translation by the lattice $(\pi\Z+\sqrt{-1}\Z)\times(\Z+\sqrt{-1}\Z)$. Thus $L/\Gamma_L$ is a compact complex two-torus.

Let $G_0=\operatorname{PSL}(2,\R)$ and let $K_0$ be the image of $\operatorname{SO}(2)$ in $G_0$, so that $G_0/K_0=\mathbb D$ is the unit disk. Choose a torsion-free cocompact lattice $\Lambda\subset G_0$, and put $\Sigma=\mathbb D/\Lambda$. This is a compact Riemann surface of genus at least two. With $G=G_0\times L$, $K=K_0\times\{e\}$ and $\Gamma=\Lambda\times\Gamma_L$, we have
$$
X=\mathbb D\times L,
\qquad
M=X/\Gamma=\Sigma\times(L/\Gamma_L).
$$
The group $G$ is noncompact and unimodular, and $K$ is compact of dimension one. A stabilizer for the $\Lambda$-action is a discrete subgroup of a compact group and hence finite. It is trivial by torsion-freeness, so the product action is free. The $G$-action on $G/K$ is proper because $K$ is compact, and its restriction to the discrete subgroup $\Gamma$ is properly discontinuous. The two factor quotients are compact, so the product quotient is compact as well.

For the invariant hyperbolic K\"ahler form $\omega_{\mathbb D}$, set
$$
\kappa=\omega_{\mathbb D}
 +\sqrt{-1}\varphi_1\wedge\overline\varphi_1
 +\sqrt{-1}\varphi_2\wedge\overline\varphi_2.
$$
Forms on the factors are identified with their pullbacks to $X$. By \eqref{eq:model-structure}, $d(\varphi_2\wedge\overline\varphi_2)=0$, so $\kappa$ is K\"ahler. For $z\in\C$, put $\xi_z=z\varphi_2+\overline z\,\overline\varphi_2$, $\Omega_z=\kappa+d\xi_z$ and $\omega_z=\Omega_z^{1,1}$. We compute
\begin{equation}\label{eq:model-components}
\begin{aligned}
(d\xi_z)^{1,1}
 &=\sqrt{-1}z\varphi_2\wedge\overline\varphi_1
   +\sqrt{-1}\overline z\,\varphi_1\wedge\overline\varphi_2,\\
\alpha_z:=(d\xi_z)^{2,0}
 &=-\sqrt{-1}z\varphi_1\wedge\varphi_2.
\end{aligned}
\end{equation}
The matrix of $\omega_z$ on $L$ is $\left(\begin{smallmatrix}1&\overline z\\z&1\end{smallmatrix}\right)$, so $\omega_z>0$ exactly when $|z|<1$. Although $\Omega_z$ is closed for all $z$,
$$
d\omega_z
=z\varphi_1\wedge\varphi_2\wedge\overline\varphi_1
 +\overline z\,\varphi_1\wedge\overline\varphi_1\wedge\overline\varphi_2
$$
is nonzero for $z\neq0$.

Use a subscript $L$ for restriction to the $L$ factor, so $\kappa_L=\sqrt{-1}\varphi_1\wedge\overline\varphi_1+\sqrt{-1}\varphi_2\wedge\overline\varphi_2$. Then $\omega_{z,L}^2=(1-|z|^2)\kappa_L^2$ and $2\alpha_z\wedge\overline\alpha_z=|z|^2\kappa_L^2$, giving $\Omega_{z,L}^2=\kappa_L^2$. Since $\dim_{\C}\mathbb D=1$,
\begin{equation}\label{eq:model-volumes}
\Omega_z^3=\kappa^3,
\qquad
\omega_z^3=(1-|z|^2)\kappa^3.
\end{equation}
There are no nonzero $G_0$-invariant real 1-forms on $\mathbb D$, since the isotropy acts by rotations. Hence every $G$-invariant real 1-form on $X$ is pulled back from a left-invariant real 1-form on $L$. The latter are spanned by the real and imaginary parts of $\varphi_1,\varphi_2$. Since $d\varphi_1=0$, \eqref{eq:model-components} parametrizes all directions in $\mathcal A$. Therefore
$$
\mathcal S(\kappa)=\{\omega_z:|z|<1\}.
$$
The unique volume maximum is attained at $z=0$, with $\mathcal R(\omega_z)=\kappa$. The deformation in Proposition \ref{prop:deformation} is $z\mapsto(1-t)z$.

The compact quotient $M=\Sigma\times(L/\Gamma_L)$ admits no transitive holomorphic action of a connected Lie group. Indeed, the component of a holomorphic vector field along $\Sigma$, restricted to any $\Sigma\times\{p\}$, is a section of $T\Sigma$. Since $\deg T\Sigma<0$, this component vanishes. No connected group of holomorphic automorphisms can therefore act transitively. This K\"ahler manifold carries the non-K\"ahler Hermitian-symplectic metrics $\omega_z$ for $0<|z|<1$.
\end{example}

\begin{example}
Choose a real left-invariant coframe $a_0,a_1,a_2,a_3$ on $\operatorname{SU}(2)\times\R$ with
$$
da_0=-2a_1\wedge a_2,\quad
da_1=-2a_2\wedge a_0,\quad
da_2=-2a_0\wedge a_1,\quad
da_3=0.
$$
For $\varphi_1=a_0+\sqrt{-1}a_3$ and $\varphi_2=a_1+\sqrt{-1}a_2$, we have
$$
d\varphi_1=-\sqrt{-1}\varphi_2\wedge\overline\varphi_2,
\qquad
d\varphi_2=-\sqrt{-1}(\varphi_1+\overline\varphi_1)\wedge\varphi_2.
$$
Declaring these forms to have type $(1,0)$ defines an invariant integrable complex structure. Let $\Gamma=\{e\}\times\Z\subset\operatorname{SU}(2)\times\R$, and put $M=(\operatorname{SU}(2)\times\R)/\Gamma$. The complex structure and the left-invariant forms descend to this compact quotient. We use the same notation for the descended forms.

Put $\omega_0=\sqrt{-1}\sum_{j=1}^2\varphi_j\wedge\overline\varphi_j$. By unimodularity and Lemma \ref{lem:formal-stokes}, extended complex linearly, every invariant three-form is closed. Applied to $\partial\omega_0$, this gives $\partial\overline\partial\omega_0=0$.

The form $u=-da_0=\sqrt{-1}\varphi_2\wedge\overline\varphi_2$ is exact and semipositive of rank one at every point. Hence $\omega_t=\omega_0+tu$ belongs to $\mathcal S(\omega_0)$ for all $t\geq0$, and $\omega_t^2=(1+t)\omega_0^2$. Thus this pluriclosed slice is unbounded.

Suppose that $M$ admits a Hermitian-symplectic form $\Omega$. Since $a_0$ is globally defined on $M$,
$$
u\wedge\Omega=d(-a_0\wedge\Omega),
$$
and hence $\int_Mu\wedge\Omega=0$ by Stokes' theorem. On the other hand, since $\dim_{\C}M=2$,
$$
u\wedge\Omega=u\wedge\Omega^{1,1}>0,
$$
because $u$ has rank one everywhere and $\Omega^{1,1}$ is positive definite. This contradiction excludes any Hermitian-symplectic structure on $M$. It is a direct application of \cite[Lemma 2]{CaZ} and does not use Corollary \ref{cor:quotient}.
\end{example}

\begin{remark}
Without unimodularity the formal Stokes identity can fail. On $\mathfrak{aff}(\R)\oplus\R^2$, where $\mathfrak{aff}(\R)$ is the Lie algebra of the affine group of the real line, choose a real coframe $b_1,b_2,b_3,b_4$ with $db_2=-b_1\wedge b_2$ and all other differentials zero. Then $d(b_2\wedge b_3\wedge b_4)=-b_1\wedge b_2\wedge b_3\wedge b_4$. Thus the formal Stokes identity fails in this example. This does not show that a nonunimodular analogue of Theorem \ref{thm:invariant} is false. Integrability is also essential to the argument through \eqref{eq:lift-system} and the pluriclosed condition.
\end{remark}

\noindent\textbf{Acknowledgments.}
We would like to express our sincere gratitude to Professor Fangyang
Zheng for his insightful guidance, valuable discussions, and continued support throughout the course of this research.

\vspace{0.3cm}

\noindent\textbf{Generative AI disclosure.}
The authors used OpenAI ChatGPT as an assisting tool in the
development of the paper. The authors take full responsibility for the contents of the paper.

\vspace{0.3cm}

\noindent\textbf{Declaration on competing interests.}
All authors declare that there are no competing interests for this article.

\end{document}